\documentclass[]{siamart190516}
\usepackage{amsmath,amssymb}
\usepackage{blkarray}
\usepackage{graphicx}
\usepackage{mathdots}
\usepackage{subcaption}
\usepackage{xcolor}

\newcommand{\dd}{\mathrm{d}}
\newcommand{\ee}{\mathrm{e}}
\newcommand{\ii}{\boldsymbol{\mathrm{i}}}


\newcommand{\Z}{\mathbb{Z}}
\newcommand{\C}{\mathbb{C}}


\newcommand{\dime}{d}
\newcommand{\ind}{\ell}
\newcommand{\order}{n}

\newcommand{\ve}[1]{\boldsymbol{#1}}
\newcommand{\npr}{d_{\ven}}
\newcommand{\vej}{\ve{j}}
\newcommand{\vek}{\ve{k}}

\newcommand{\ven}{\ve{n}}
\newcommand{\p}{_{\ven}}
\newcommand{\vet}{\ve{\theta}}
\newcommand{\veq}{\ve{q}}
\newcommand{\vex}{\ve{x}}
\newcommand{\sn}{_{\ven}}

\newcommand{\sk}{_{\vek}}


\newcommand{\Y}{Y}

\newcommand{\Tn}{T\sn}
\newcommand{\Pin}{\Pi\sn}
\newcommand{\Yn}{Y\sn}
\newcommand{\Un}{U\sn}

\newcommand{\Prec}{\mathcal{P}}

\newcommand{\Id}{[-\pi,\pi]^\dime}
\newcommand{\glteq}{\sim_{\rm GLT}}

\newtheorem{remark}{Remark}
\theoremstyle{plain}
\newtheorem{example}{Example}

\graphicspath{{./matlab_tests/plots/}}

\title{The asymptotic spectrum of flipped multilevel Toeplitz matrices and of certain preconditionings}
\author{M.\ Mazza$^1$ and J.\ Pestana$^{2,}$\thanks{corresponding author
\newline
${}^1$ Department of Humanities and Innovation, University of Insubria, via Valleggio 11, 22100 Como, Italy (\texttt{mariarosa.mazza@uninsubria.it}),
		\newline
${}^2$	Department of Mathematics and Statistics, University of Strathclyde, 16 Richmond Street,
	Glasgow G1 1XQ, UK (\texttt{jennifer.pestana@strath.ac.uk})}}

\setlength{\parskip}{1em}
\setlength{\parindent}{0em}

\begin{document}
\maketitle

\begin{abstract}
In this work, we perform a spectral analysis of flipped multilevel Toeplitz sequences, i.e., we study the asymptotic spectral behaviour of $\{\Yn\Tn(f)\}\sn$, where $\Tn(f)$ is a real, square multilevel Toeplitz matrix generated by a function $f\in L^1([-\pi,\pi]^d)$ and
$\Yn$ is the exchange matrix, which has $1$s on the main anti-diagonal. In line with what we have shown for unilevel flipped Toeplitz matrix sequences, 
the asymptotic spectrum is determined by a $2\times 2$ matrix-valued function whose eigenvalues are $\pm |f|$. Furthermore, we characterize the eigenvalue distribution of certain preconditioned flipped multilevel Toeplitz sequences with an analysis that covers both multilevel Toeplitz and circulant preconditioners. Finally, all our findings are illustrated by several numerical experiments.

\end{abstract}

\begin{keywords}
multilevel Toeplitz matrices; spectral symbol; GLT theory; preconditioning
\end{keywords}

\begin{AMS}
65F08, 65F10, 15B05
\end{AMS}

\section{Introduction}
In \cite{FFHM19,MaPe19} it was independently shown that when $T_n(f) \in \mathbb{R}^{n\times n}$ is a Toeplitz matrix generated by a function $f \in L^1([-\pi,\pi])$ then 
the eigenvalues of $Y_nT_n(f)$ are distributed like $\pm |f|$, where 
\begin{equation*}
\label{eqn:flip1d}
Y_n = 
\begin{bmatrix}
& & 1\\
& \iddots & \\
1 & & 
\end{bmatrix}.
\end{equation*}
In this note, we show that this result also holds true when the Toeplitz matrix $T_n(f)$ is replaced by a  multilevel Toeplitz matrix $\Tn(f)\in\mathbb{R}^{\npr\times \npr}$ and $Y_n$ is replaced by 
\begin{equation*}
\label{eqn:flip}
\Yn=Y_{n_1}\otimes\cdots\otimes Y_{n_d}, 
\end{equation*}
where $f \in L^1([-\pi,\pi]^d)$, $\ven=(n_1,\ldots,n_d)$ and $\npr = n_1\dotsm n_\dime$. 
More specifically, we prove that the eigenvalues of $\Yn\Tn(f)$ behave like the eigenvalues of the matrix-valued symbol 
\begin{equation}
\label{eqn:g}
g = 
\begin{bmatrix}
& f\\
f^\ast & 
\end{bmatrix},
\end{equation}
where $f^\ast$ is the conjugate of $f$, i.e., the eigenvalues of the flipped multilevel Toeplitz matrix $\Yn\Tn(f)$ are distributed like $\pm |f|$. 

Describing the spectra of these flipped matrices is important for solving linear systems with $\Tn(f)$ as coefficient matrix. Since a (multilevel) Toeplitz matrix can  be symmetrized by the flip matrix, the resulting linear system may be solved by MINRES or preconditioned MINRES, with its short term recurrences and descriptive convergence theory based on eigenvalues~\cite{Pest19,PeWa15}. Hence, knowledge of the spectrum of $\Yn\Tn(f)$ is critical for accurately estimating the MINRES convergence rate, and developing and analysing effective preconditioners. With this in mind, we characterize the eigenvalue distribution of certain preconditioned flipped multilevel Toeplitz sequences with an analysis that covers both multilevel Toeplitz and circulant preconditioners.

The paper is organized as follows. \Cref{sec:prelim} provides background material and preliminary results. The key results are then presented in \cref{sec:main}, and illustrated by numerical experiments in \cref{sec:num}. Our conclusions can be found in \cref{sec:conc}.

\section{Preliminaries} \label{sec:prelim}
\begin{sloppypar}
In this section we formalize the definition of multilevel (block) Toeplitz sequences associated with a Lebesgue integrable (matrix-valued) function. Next, we define the spectral distribution, in the sense of the eigenvalues and of the singular values, of a generic matrix sequence. 
To deal with the spectral distribution of preconditioned flipped multilevel Toeplitz matrices, we introduce a class of matrix sequences (the multilevel block GLT class) that contains multilevel block Toeplitz sequences. 
\end{sloppypar}

\subsection{Notation}
\label{subsec:notation}
\begin{sloppypar}
To describe multilevel matrices we require multi-indices, $\vek = (k_1,\dotsc,k_\dime)$, that we denote  by bold letters. 
Whenever we use the expression  ${{\vek}\to\infty}$,  
we mean that every component of the vector ${\vek}$ tends to infinity, that is,
	$\min_{\ell=1,\dots,d} k_\ell\to\infty$.
\end{sloppypar}
The complex conjugates of a scalar $\alpha$, and scalar-valued function $f(\vet)$, 
where  $\vet = (\theta_1, \theta_2,\dotsc, \theta_\dime)$,   $\dime\ge 1$, are denoted by $\alpha^\ast$ and $f^\ast(\vet)$, respectively. 
Similarly, the conjugate transpose of a vector $x$ is $x^\ast$, and the conjugate transpose of a matrix $X$ is $X^\ast$. 
Additionally, by $|f(\vet)|$ we mean $|f(\vet)|=\left(f(\vet)f^*(\vet)\right)^{1/2}$. 
The $n\times n$ identity matrix is $I_n$. 
 
Throughout, by $d$-level $s\times s$-block matrix sequences we mean sequences of matrices of the form $\{A\sn\}_n$, where the index $n$ varies in an infinite subset of $\mathbb{N}$ and $\ven = \ven(n)$ is a $d$-index with positive components that depends on $n$ and satisfies $\ven\rightarrow\infty$ as $n\rightarrow\infty$. The size of $A\sn$ is $s\cdot d\sn=sn_1\cdots n_d$. We will equivalently use the notation \lq\lq $\{A\sn\}\sn$'' to mean \lq\lq $\{A\sn\}_n$''.

 \subsection{Multilevel block Toeplitz matrices and their spectral properties}
In \cref{def:Tml} we introduce the notion of multilevel block Toeplitz matrix sequences generated by $f$.
\begin{definition}\label{def:Tml}
	Let $f:[-\pi, \pi]^d\rightarrow \mathbb{C}^{s\times s}$, $d,s\ge1$, where $f=[f_{ij}]_{i,j=1}^s$ is  such that $f_{ij}\in L^1([-\pi,\pi]^d)$. Let the Fourier coefficients of $f$ be given by 
	\[t\sk:=\frac{1}{(2\pi)^\dime}\int_{\Id} f(\vet) \, \ee^{-\ii \langle \vek, 
\vet\rangle}\, \dd\vet\in\mathbb{C}^{s\times s},\quad \vek = (k_1,\dotsc, k_d) \in \Z^\dime,\]
where the integrals are computed componentwise and $\langle \vek,\vet\rangle = \sum_{\ind=1}^\dime k_\ind \theta_\ind$. 
The $\ven$-th $d$-level $s\times s$-block Toeplitz matrix associated with $f$ is the matrix of order $s\cdot\npr$,  $\npr=n_1\dotsm n_\dime$, given by
\[\Tn(f) = \sum_{|k_1| < n_1} \dotsb \sum_{|k_\dime| < n_\dime} J_{n_1}^{(k_1)} \otimes \dotsb \otimes J_{n_\dime}^{(k_\dime)}\otimes t\sk,\]
where $J_\order^{(\ind)}$ is the matrix of dimension $\order$ whose ($i,j$) entry is 1 if $i-j = \ind$ and is zero otherwise. 
	The set $\{\Tn(f)\}\sn$ is
	called the \emph{family of multilevel block Toeplitz matrices generated by $f$}. 
	The function $f$ is referred to as the \emph{generating function}  of $\{\Tn(f)\}\sn$.
\end{definition}

We now discuss the spectra of multilevel block Toeplitz matrices. 
To clarify the sense in which the function $f$ provides information on the spectrum for these problems, we need to introduce the following definition.

\begin{definition}\label{def-distribution}
	Let $f:G\to\mathbb{R}^{s\times s}$ be a measurable function, defined on a measurable set $G\subset\mathbb R^d$ with $d\ge 1$,
	$0<m_\ell(G)<\infty$. Let $\mathcal C_0(\mathbb K)$ be the set of continuous functions with compact support over $\mathbb
	K\in \{\mathbb C, \mathbb R_0^+\}$ and let $\{A\sn\}\sn$, ${\ven}\in\mathbb{N}^v$ $v\ge 1$, be a sequence of matrices with
	eigenvalues $\lambda_j(A\sn)$, $j=1,\ldots,d\sn$ and singular
	values $\sigma_j(A\sn)$, $j=1,\ldots,
	d\sn$, where $d\sn={\rm dim}(A\sn)$ is a monotonic function with respect to each variable $n_i$, $i=1,\ldots,v$.
	
	\begin{itemize}
		\item We say that $\{A\sn\}\sn$ is {\em distributed as the pair
			$(f,G)$ in the sense of the eigenvalues,} and we write $\{A\sn\}\sn\sim_\lambda(f,G),$ if the following limit relation holds for all $F\in\mathcal C_0(\mathbb C)$:
		\begin{align}\label{distribution:eig}
		\lim_{{\ven}\to\infty}\frac{1}{d\sn}\sum_{j=1}^{d\sn}F(\lambda_j(A\sn))=
		\frac1{m_\ell(G)}\int_G \frac{{\rm tr}(F(f(\vet)))}{s} d\vet.
		\end{align}
		In this case, we say that f is the \emph{symbol} of the matrix sequence $\{A\sn\}\sn$.
		\item We say that $\{A\sn\}\sn$ is {\em distributed as the pair
			$(f,G)$ in the sense of the singular values,} and we write $\{A\sn\}\sn\sim_\sigma(f,G),$ if the following
		limit relation holds for all $F\in\mathcal C_0(\mathbb R_0^+)$:
		\begin{align}\label{distribution:sv}
		\lim_{{\ven}\to\infty}\frac{1}{d\sn}\sum_{j=1}^{d\sn}F(\sigma_j(A\sn))=
		\frac1{m_\ell(G)}\int_G \frac{{\rm tr}(F(|f(\vet)|))}{s} d\vet.
		\end{align}
	\end{itemize}
	Recall that in this setting the expression ${{\ven}\to\infty}$ means that every component of the vector ${\ven}$ tends to infinity, that is,
	$\min_{i=1,\dots,v} n_i\to\infty$. 
\end{definition}

\begin{remark}\label{rem:approx}
	If $f$ is smooth enough, an informal interpretation of the limit relation
	\cref{distribution:eig} (resp. \cref{distribution:sv})
	is that when $\ven$ is sufficiently large,
	 $d\sn/s$ eigenvalues (resp.\ singular values) of $A\sn$ can
	be approximated by a sampling of $\lambda_1(f)$ (resp.\ $\sigma_1(f)$)
	on a uniform equispaced grid of the domain $G$, and so on until the
	last $d\sn/s$ eigenvalues (resp.\ singular values), which can be approximated by an equispaced sampling
	of $\lambda_s(f)$ (resp. $\sigma_s(f)$) in the domain.
\end{remark}

The above definitions are applicable to multilevel Toeplitz matrix sequences, 
as the following theorem (due to Szeg\H{o}, Tilli, Zamarashkin, Tyrtyshnikov, ...) shows.
\begin{theorem}[see \cite{GSz,Tillinota,TyL1}]\label{szego}
	Let $\{T\sn(f)\}\sn$ be a multilevel Toeplitz sequence generated by $f\in L^1([-\pi,\pi]^d)$. Then, $\left\{T\sn(f)\right\}\sn\sim_\sigma (f,[-\pi,\pi]^d).$
	Moreover, if $f$ is real-valued, then 
	$\left\{\Tn(f)\right\}\sn\sim_\lambda (f,[-\pi,\pi]^d).$
\end{theorem}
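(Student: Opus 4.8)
The plan is to follow the classical two-step density argument behind all Szeg\H{o}-type distribution theorems: reduce first to a trigonometric polynomial generating function, then pass to a general $f\in L^1$ by approximation. (Essentially the same machinery reappears for the block and preconditioned statements later, so it is worth recording the scalar case in this form.)

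\emph{Polynomial case.} Suppose first that $f$ is a trigonometric polynomial of degree at most $\degree$. For every $\ven$ with $\min_\ind n_\ind>2\degree$ I would write $\Tn(f)=\Cn(f)+E\sn$, where $\Cn(f)$ is the $d$-level circulant matrix with the same symbol and $E\sn$ is a ``corner'' correction with $\rank E\sn = o(d\sn)$ as $\ven\to\infty$ (in fact $\rank E\sn=O\!\left(d\sn\sum_\ind 1/n_\ind\right)$). The matrix $\Cn(f)$ is normal and its eigenvalues are exactly the values of $f$ on a uniform grid on $\Id$, so for every $F\in\mathcal C_0(\mathbb C)$ (resp.\ $F\in\mathcal C_0(\mathbb R_0^+)$) the normalized traces $\frac1{d\sn}\sum_j F(\lambda_j(\Cn(f)))$ and $\frac1{d\sn}\sum_j F(\sigma_j(\Cn(f)))$ are Riemann sums converging to $\frac1{(2\pi)^\dime}\int_{\Id}F(f)\,\dd\vet$ and $\frac1{(2\pi)^\dime}\int_{\Id}F(|f|)\,\dd\vet$ respectively. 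Since a perturbation of rank $o(d\sn)$ changes such averages by $o(1)$, this already gives $\{\Tn(f)\}\sn\sim_\sigma(f,\Id)$ and, when $f$ is real-valued so that $\Tn(f)$ is Hermitian, also $\{\Tn(f)\}\sn\sim_\lambda(f,\Id)$.

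\emph{Density step.} For general $f\in L^1(\Id)$ I would pick trigonometric polynomials $p_m\to f$ in $L^1(\Id)$, taken real-valued when $f$ is. By linearity $\Tn(f)-\Tn(p_m)=\Tn(f-p_m)$, and the Schatten-$1$ bound $\frac1{d\sn}\|\Tn(f-p_m)\|_1\le\frac1{(2\pi)^\dime}\|f-p_m\|_{L^1}\to 0$ (obtained by writing $\Tn(g)$ as an integral of rank-one matrices weighted by $g$) shows that $\{\Tn(p_m)\}\sn$ is an approximating class of sequences for $\{\Tn(f)\}\sn$. Invoking the stability of the singular value distribution under such perturbations, together with $|p_m|\to|f|$ in measure, yields $\{\Tn(f)\}\sn\sim_\sigma(f,\Id)$; in the real case each $\Tn(f-p_m)$ is Hermitian, so the analogous stability of the eigenvalue distribution for Hermitian sequences, together with $p_m\to f$ in measure, yields $\{\Tn(f)\}\sn\sim_\lambda(f,\Id)$. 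Monotonicity of $d\sn=n_1\dotsm n_\dime$ in each $n_\ind$, as required by \cref{def-distribution}, is immediate.

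I expect the density step to be the only genuine obstacle: the rank-correction picture is clean for polynomials but collapses for rough symbols, and the passage to $L^1$ really does need the trace-norm estimate above together with the approximating-classes-of-sequences convergence theorem. It is also worth stressing that real-valuedness of $f$ is essential for the eigenvalue claim — eigenvalues of non-Hermitian matrices are not controlled by the trace norm, which is precisely why $\sim_\lambda$ is asserted only then — and that the cleanest route to eigenvalue stability for Hermitian approximating sequences is via Weyl's inequalities. Since all of this is standard, in the paper I would simply cite \cite{GSz,Tillinota,TyL1}.
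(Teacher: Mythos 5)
The paper states \cref{szego} as a background result with no proof --- it is attributed to \cite{GSz,Tillinota,TyL1} --- so there is no in-paper argument against which to match your reasoning line by line. That said, your two-step proof is the standard one and is essentially sound: the circulant-plus-low-rank reduction for trigonometric polynomials (Strang circulant with eigenvalues given by exact grid samples of $f$, corner correction of rank $O\bigl(d\sn\sum_\ind 1/n_\ind\bigr)=o(d\sn)$) handles the polynomial case, and the density step via the Schatten-$1$ bound $\|\Tn(g)\|_1\le d\sn\|g\|_{L^1}/(2\pi)^\dime$ together with a.c.s.\ stability of $\sim_\sigma$ (and of $\sim_\lambda$ for Hermitian families, via Weyl/interlacing) completes it. It is also the same machinery the paper itself re-uses in the proof of \cref{lem:Y_GLT}, there packaged through \cref{prop:acs_toeplitz} and \textbf{GLT5} rather than an explicit trace-norm computation. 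So although your proposal proves more than the paper does at this point, it is neither a gap nor a genuine departure: you have correctly reconstructed the argument standing behind the citation, and your closing remark that one would simply cite in the paper is exactly right.
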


In the case that $f$ is a Hermitian matrix-valued function, the previous theorem can be extended as follows:
\begin{theorem}[see \cite{Tillinota}]\label{szego-herm}
	Let $f:[-\pi, \pi]^d\rightarrow \mathbb{C}^{s\times s}$, $d>1$, with $f=[f_{ij}]_{i,j=1}^s$ such that $f_{ij}\in L^1([-\pi,\pi]^d)$, be a Hermitian matrix-valued function. Then, $\{\Tn(f)\}\sn\sim_\lambda(f,[-\pi, \pi]^d).$
\end{theorem}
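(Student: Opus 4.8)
The plan is to establish the statement in two stages: first when $f$ is a Hermitian matrix-valued trigonometric polynomial, where the limiting eigenvalue distribution can be read off almost by hand, and then for a general Hermitian $f$ with $L^1$ entries by an approximation argument in the spirit of Tilli's locally-Toeplitz proof \cite{Tillinota}.

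First I would treat the case in which $f$ is a Hermitian trigonometric polynomial. Then $\Tn(f)$ is Hermitian and $\|\Tn(f)\|\le\esssup_{\vet}\|f(\vet)\|$, so all of its eigenvalues lie in a bounded interval independent of $\ven$. I would compare $\Tn(f)$ with the $d$-level block-circulant matrix $\Cn(f)$ obtained by replacing, in \cref{def:Tml}, each shift $J_{n_\ind}^{(k_\ind)}$ by the corresponding circular shift. Being block-circulant with Hermitian generating blocks, $\Cn(f)$ is Hermitian and is unitarily block-diagonalised by a $d$-level discrete Fourier transform, so its eigenvalues are exactly the numbers $\lambda_i\!\big(f(\vet^{(\vej)})\big)$, $i=1,\dots,s$, where $\vet^{(\vej)}$ runs over the uniform grid on $\Id$ with $n_1\cdots n_\dime$ nodes; a Riemann-sum argument (using that $\vet\mapsto\lambda_i(f(\vet))$ is continuous and $F\in\mathcal C_0$ is uniformly continuous on the relevant compact interval) then yields $\{\Cn(f)\}\sn\sim_\lambda(f,\Id)$ directly from \cref{distribution:eig}. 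Finally, $E\sn:=\Tn(f)-\Cn(f)$ is Hermitian, has spectral norm bounded independently of $\ven$, and --- since $f$ has fixed degree --- satisfies $\rank(E\sn)\le c\,(n_1\cdots n_\dime)\sum_{\ind=1}^\dime n_\ind^{-1}=o(\dim\Tn(f))$ because every $n_\ind\to\infty$; as such a perturbation changes $\tfrac1{\dim\Tn(f)}\sum_j F(\lambda_j)$ by at most $\tfrac{2\|F\|_\infty}{\dim\Tn(f)}\rank(E\sn)\to0$, we conclude $\{\Tn(f)\}\sn\sim_\lambda(f,\Id)$.

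For a general Hermitian $f$ with $L^1$ entries, I would take $f_m$ to be the $d$-dimensional Fej\'er (Ces\`aro) means of $f$; these are again Hermitian trigonometric polynomials and $\|f-f_m\|_{L^1}\to0$, so $f_m\to f$ in measure. Writing $\Tn(f)=\Tn(f_m)+\Tn(f-f_m)$ with all terms Hermitian, and using the standard trace-norm bound $\|\Tn(g)\|_1\le c_s\,(n_1\cdots n_\dime)\|g\|_{L^1}$ with $g=f-f_m$, one sees that $\{\Tn(f_m)\}\sn$ is an approximating class of sequences for $\{\Tn(f)\}\sn$ whose approximation error tends to $0$ as $m\to\infty$. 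Since Stage 1 gives $\{\Tn(f_m)\}\sn\sim_\lambda(f_m,\Id)$ for each $m$, the result follows from the stability of eigenvalue distributions under approximating classes of sequences for Hermitian sequences whose symbols converge in measure (the three-$\varepsilon$ argument in \cite{Tillinota}). Equivalently, once the multilevel block GLT machinery of \cref{sec:prelim} is available, one may simply observe that $\{\Tn(f)\}\sn$ is a $d$-level block GLT sequence with symbol $f$ formed by Hermitian matrices with Hermitian symbol, and read the distribution off from the GLT properties.

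I expect the only real obstacle to be the rank estimate for $E\sn=\Tn(f)-\Cn(f)$ in Stage 1: in contrast to the unilevel case, where this correction has bounded rank, here it is merely $o(\dim\Tn(f))$, governed by the surface-to-volume ratio $\sum_{\ind=1}^\dime n_\ind^{-1}$ --- which is exactly why the assumption that \emph{every} component of $\ven$ tends to infinity is indispensable. The remaining ingredients --- the Riemann-sum reduction, the Fej\'er approximation, and the a.c.s.\ stability theorem --- are routine once that estimate is in hand.
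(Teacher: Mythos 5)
The paper states this result without proof, citing it from Tilli~\cite{Tillinota} (and its block version) as a black box, so there is no internal argument to compare yours against. That said, your sketch is correct and is essentially the classical proof: at the polynomial stage, the circulant $\Cn(f)$ is diagonalised by the $d$-dimensional DFT so its eigenvalue distribution is a Riemann sum for $\tfrac1{(2\pi)^d}\int \tfrac1s\operatorname{tr}F(f)$; the correction $E\sn=\Tn(f)-\Cn(f)$ has bounded norm and, via the telescoping decomposition of $J_{n_1}^{(k_1)}\otimes\dotsb\otimes J_{n_d}^{(k_d)} - Z_{n_1}^{k_1}\otimes\dotsb\otimes Z_{n_d}^{k_d}$ into $d$ Kronecker products each with a single low-rank factor, its rank is indeed $O\bigl(\npr\sum_{\ind}q_\ind/n_\ind\bigr)=o(\npr)$, which is where the requirement that every $n_\ind\to\infty$ enters; and the passage to general Hermitian $f\in L^1$ via Fej\'er means, the trace-norm bound $\|\Tn(g)\|_1\le c_s\,\npr\,\|g\|_{L^1}$, and a.c.s.\ stability of eigenvalue distributions for Hermitian sequences is exactly Tilli's closing argument. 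Your final remark is also the pragmatic point: in the framework the paper actually sets up, the theorem is an immediate consequence of \textbf{GLT1} and \textbf{GLT3}, and that is how the paper implicitly treats it.
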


The following theorem is a useful tool for computing the spectral distribution of a sequence of Hermitian matrices. For its proof, see \cite[Theorem 4.3]{mari}. 

\begin{theorem}\label{th:extradimensional}
	Let $f:G\subset\mathbb{R}^d\rightarrow\mathbb{C}^{s\times s}$, let $\{X\sn\}\sn$ be a sequence of matrices with $X\sn$ Hermitian of size $d\sn$, and let $\{P\sn\}\sn$ be a sequence such that $P\sn\in\mathbb C^{d\sn\times\delta\sn}$, $P\sn^*P\sn=I_{\delta\sn}$, $\delta\sn\le d\sn$ and $\delta\sn/d\sn\to1$ as $\ven\to\infty$. Then $\{X\sn\}\sn\sim_{\lambda}(f,G)$ if and only if $\{P\sn^*X\sn P\sn\}\sn\sim_{\lambda}(f,G)$.
\end{theorem}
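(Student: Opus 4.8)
The plan is to compare the normalized eigenvalue counting functions of $X\sn$ and of $B\sn:=P\sn^{*}X\sn P\sn$ by means of Cauchy interlacing, and then to use the hypotheses $\delta\sn\le d\sn$ and $\delta\sn/d\sn\to1$ to force these two functions — and hence the two eigenvalue distributions — to agree in the limit. Since the roles of the two sequences turn out to be symmetric in the final estimate, both implications of the ``if and only if'' will drop out at once.

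Since $X\sn$ is Hermitian and $P\sn^{*}P\sn=I_{\delta\sn}$, the matrix $B\sn$ is a Hermitian compression of $X\sn$ (equivalently, after completing $P\sn$ to a unitary matrix, $B\sn$ is a leading principal submatrix of a matrix unitarily similar to $X\sn$), so the Cauchy interlacing (Poincar\'e separation) theorem gives, with eigenvalues arranged in non-increasing order and $r\sn:=d\sn-\delta\sn$,
\begin{equation*}
\lambda_{j+r\sn}(X\sn)\le\lambda_j(B\sn)\le\lambda_j(X\sn),\qquad j=1,\dots,\delta\sn .
\end{equation*}
Writing $N_{X\sn}(t):=|\{\,k:\lambda_k(X\sn)>t\,\}|$ and $N_{B\sn}(t):=|\{\,j:\lambda_j(B\sn)>t\,\}|$, an immediate consequence of the two displayed inequalities (the upper bound because $\lambda_j(B\sn)\le\lambda_j(X\sn)$; the lower bound because $\lambda_{j+r\sn}(X\sn)>t$ implies $\lambda_j(B\sn)>t$, and this accounts for all but at most $r\sn$ of the relevant indices) is that, for every $t\in\mathbb R$,
\begin{equation*}
N_{X\sn}(t)-r\sn\;\le\;N_{B\sn}(t)\;\le\;N_{X\sn}(t).
\end{equation*}
Dividing by $\delta\sn$ and using $0\le N_{X\sn}(t)\le d\sn$ together with $r\sn/\delta\sn\to0$ and $d\sn/\delta\sn\to1$, one gets the uniform bound
\begin{equation*}
\sup_{t\in\mathbb R}\left|\frac{N_{B\sn}(t)}{\delta\sn}-\frac{N_{X\sn}(t)}{d\sn}\right|\longrightarrow0\qquad\text{as }\ven\to\infty .
\end{equation*}

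It remains to convert this uniform closeness of the normalized counting functions into closeness of the averages appearing in \cref{distribution:eig}. As $X\sn$ and $B\sn$ are Hermitian, only the restriction of the test function to $\mathbb R$ is used, and it suffices to argue for $F\in C_c^1(\mathbb R)$, the general case $F\in\mathcal C_0(\mathbb C)$ following by approximating $F$ uniformly by such functions (the averages being uniformly bounded by $\|F\|_\infty$). For $F\in C_c^1(\mathbb R)$ the identity $F(\lambda)=\int_{-\infty}^{\lambda}F'(t)\,\dd t$ yields
\begin{equation*}
\frac1{\delta\sn}\sum_{j=1}^{\delta\sn}F(\lambda_j(B\sn))=\int_{\mathbb R}F'(t)\,\frac{N_{B\sn}(t)}{\delta\sn}\,\dd t,\qquad \frac1{d\sn}\sum_{k=1}^{d\sn}F(\lambda_k(X\sn))=\int_{\mathbb R}F'(t)\,\frac{N_{X\sn}(t)}{d\sn}\,\dd t,
\end{equation*}
so that
\begin{equation*}
\left|\frac1{\delta\sn}\sum_{j=1}^{\delta\sn}F(\lambda_j(B\sn))-\frac1{d\sn}\sum_{k=1}^{d\sn}F(\lambda_k(X\sn))\right|\le\|F'\|_{L^1(\mathbb R)}\,\sup_{t\in\mathbb R}\left|\frac{N_{B\sn}(t)}{\delta\sn}-\frac{N_{X\sn}(t)}{d\sn}\right|\longrightarrow0 .
\end{equation*}
Hence the two averages have the same limit whenever either one converges, which is precisely the statement that $\{X\sn\}\sn\sim_\lambda(f,G)$ if and only if $\{B\sn\}\sn\sim_\lambda(f,G)$.

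I expect the only genuine obstacle to be that $X\sn$ is not assumed to have uniformly bounded spectrum, so $\lambda_j(B\sn)$ and $\lambda_j(X\sn)$ need not be close individually and a naive ``match the eigenvalues one by one'' argument fails; this is exactly why the proof is routed through the counting functions, whose discrepancy is controlled by $r\sn=o(d\sn)$ at \emph{every} threshold $t$ irrespective of how the spectrum is spread out. The remaining ingredients — the elementary counting estimate from interlacing and the density reduction to $C_c^1$ — are routine.
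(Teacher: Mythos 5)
The paper does not prove this theorem: it simply cites \cite[Theorem 4.3]{mari}, so there is no internal proof to compare against. Your argument is a correct, self-contained derivation, and it is essentially the standard route in the GLT literature for such ``extra-dimensional'' or compression results.

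A few remarks on the mechanics, all of which check out. The Poincar\'e separation theorem applies because $P\sn$ can be completed to a unitary $[\,P\sn\ S\sn\,]$, so $B\sn=P\sn^*X\sn P\sn$ is a $\delta\sn\times\delta\sn$ principal submatrix of a matrix unitarily similar to $X\sn$; the interlacing
\begin{equation*}
\lambda_{j+r\sn}(X\sn)\le\lambda_j(B\sn)\le\lambda_j(X\sn),\qquad r\sn=d\sn-\delta\sn,
\end{equation*}
then gives the two-sided counting-function bound $N_{X\sn}(t)-r\sn\le N_{B\sn}(t)\le N_{X\sn}(t)$ exactly as you state, and dividing through and using $0\le N_{X\sn}(t)\le d\sn$ yields
\begin{equation*}
\sup_t\left|\frac{N_{B\sn}(t)}{\delta\sn}-\frac{N_{X\sn}(t)}{d\sn}\right|\le\frac{r\sn}{\delta\sn}\to 0.
\end{equation*}
The passage to test functions via $F(\lambda)=\int_{-\infty}^{\lambda}F'(t)\,\dd t$ and $\sum_jF(\lambda_j)=\int F'(t)N(t)\,\dd t$ is valid for $F\in C_c^1(\mathbb R)$ (absolute integrability is automatic since $F'$ has compact support and $N$ is bounded), and the reduction from $\mathcal C_0(\mathbb C)$ to $C_c^1(\mathbb R)$ is legitimate: the eigenvalues are real, and uniform approximation controls the averages because each is bounded by $\|F\|_\infty$. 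Your closing caveat is also the right one to flag: since $\|X\sn\|$ is not assumed bounded, one cannot match $\lambda_j(B\sn)$ to $\lambda_j(X\sn)$ individually, and routing through the counting functions is precisely what makes the argument robust to an arbitrarily spread spectrum. Compared with the rank-perturbation formulation one often sees in GLT texts (pad $B\sn$ to size $d\sn$ and treat the complement as a rank-$O(r\sn)$ Hermitian perturbation), your interlacing argument is a slightly more elementary packaging of the same idea and avoids having to discuss the a.c.s.\ machinery, at the cost of redoing the counting-function estimate by hand.
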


\subsection{Multilevel block generalized locally Toeplitz class}
In the sequel, we introduce the $*$-algebra of multilevel block generalized locally Toeplitz (GLT) matrix sequences \cite{axioms,GaSe17}. The formal definition of this class is rather technical and involves somewhat cumbersome notation: therefore we just give and briefly discuss a few properties of the multilevel block GLT class, which are sufficient for studying the spectral features of preconditioned flipped multilevel Toeplitz matrices.

Throughout, we use the notation
$$\{A\sn\}\p\sim_{\rm GLT} { \kappa(\vex,\vet)},\quad \ \ \kappa:[0,1]^d\times[-\pi,\pi]^d\rightarrow\mathbb{C}^{s\times s}$$
to indicate that the sequence $\{A\sn\}\p$ is a $d$-level $s\times s$-block GLT sequence with GLT symbol $\kappa(\vex,\vet)$.

Here we list five of the main features of multilevel block GLT sequences.
\begin{itemize}
	\item[{\bf GLT1}] Let $\{A\p\}\p\sim_{\rm GLT}\kappa$ with $\kappa:G\rightarrow \mathbb{C}^{s\times s}$, $G=[0,1]^d\times[-\pi,\pi]^d$. Then $\{A\p\}\p\sim_\sigma(\kappa,G)$. If the matrices $A\p$ are Hermitian, then it also holds that $\{A\p\}\p\sim_\lambda(\kappa,G)$.
	\item[{\bf GLT2}] The set of block GLT sequences forms a $*$-algebra, i.e., it is closed under linear combinations, products, inversion and conjugation. In formulae, let $\{ A\p \}\p \glteq \kappa_1$ and $\{ B\p \}\p \glteq \kappa_2$, then
	\begin{itemize}
		
		\item[$\bullet$] $\{\alpha A\p + \beta B\p\}\p \glteq \alpha\kappa_1+\beta \kappa_2, \quad \alpha, \beta \in \mathbb{C};$
		\item[$\bullet$] $\{A\p B\p\}\p \glteq \kappa_1 \kappa_2;$
		\item[$\bullet$] $\{A^{-1}\p\}\p \glteq \kappa_1^{-1}$ provided that $\kappa_1$ is invertible a.e.;
		\item[$\bullet$] $\{ A\p^{*} \}\p \glteq {\kappa^*_1}.$
	\end{itemize}
	\item[{\bf GLT3}] Any sequence of multilevel block Toeplitz matrices $\{ T\p(f) \}\p$ generated by a function $f:[-\pi, \pi]^d\rightarrow \mathbb{C}^{s\times s}$, with $f=[f_{ij}]_{i,j=1}^s$ such that $f_{ij}\in L^1([-\pi,\pi]^d)$,  is a $d$-level $s\times s$-block GLT sequence with symbol $\kappa(\vex, \vet) = f(\vet)$.
	\item[{\bf GLT4}] Let $\{A\p\}\p\sim_\sigma 0$. We say that $\{A\p\}\p$ is a \emph{zero-distributed matrix sequence}. Note that for any $s>1$ $\{A\p\}\p\sim_\sigma O_s$, with $O_s$ the $s\times s$ null matrix, is equivalent to $\{A\p\}\p\sim_\sigma 0$. Every zero-distributed matrix sequence is a block GLT sequence with symbol $O_s$ and viceversa, i.e., $\{A\p\}\p\sim_\sigma0$ $\iff$ $\{A\p\}\p\sim_{\rm GLT}O_s$.
	\item[\textbf{GLT5}] Let $\{A\sn\}\sn$ be a $d$-level matrix sequence and let $\{B_{\ven,m}\}\sn$ be a sequence of matrix sequences that satisfies the following condition: for each $m$ there exists $n_m$, such that for $m>n_m$ 
	\begin{equation*}
	A_{\ven}=B_{\ven,m}+R_{\ven,m}+E_{\ven,m}
	\end{equation*}
	with 
	\begin{equation*}
	{\rm rank}(R_{\ven,m})<c(m)d_{\ven}, \quad \|E_{\ven,m}\|\le\omega(m), \quad \lim_{m\rightarrow\infty}c(m),\omega(m)=0. 
	\end{equation*}
	\begin{sloppypar}
	We say that $\{B_{\ven,m}\}$ is an \lq\lq a.c.s.'' (approximating class of sequences) for $\{A_{\ven}\}\sn$, and we write $\{B_{\ven,m}\}\sn\xrightarrow{a.c.s.}\{A_{\ven}\}\sn$. Moreover, $\{A\sn\}\sn\sim_{\rm GLT}\kappa$ if and only if there exist GLT sequences $\{B_{\ven,m}\}\sn\sim_{\rm GLT}\kappa_m$
	and $\kappa_m\rightarrow \kappa$ in measure.
	\end{sloppypar}
\end{itemize}

The following proposition provides an a.c.s.\ for a sequence of multilevel block Toeplitz matrices (see \cite{GaSe18}).

\begin{proposition}\label{prop:acs_toeplitz}
\begin{sloppypar}
Let $\{f_m\}_m$ be a sequence of d-variate trigonometric matrix-valued polynomials with $f_m:[-\pi, \pi]^d\rightarrow \mathbb{C}^{s\times s}$, $f_m=[(f_m)_{ij}]_{i,j=1}^s$ such that $(f_m)_{ij}\in L^1([-\pi,\pi]^d)$. If $(f_m)_{ij} \rightarrow (f)_{ij}$ in $L^1([-\pi,\pi]^d)$, then the sequence $\{\Tn(f_m)\}\sn$ satisfies
\begin{equation*}\label{eq:acs_toeplitz}
\{\Tn(f_m)\}\sn{\xrightarrow{a.c.s.}}\{\Tn(f)\}\sn.
\end{equation*}
\end{sloppypar}
\end{proposition}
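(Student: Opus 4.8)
The plan is to reduce Proposition~\ref{prop:acs_toeplitz} to the scalar (non-block, one-level) case by unfolding the tensor structure and the block structure, and then invoke the definition of an a.c.s.\ directly. First I would recall that, writing $r\sk := (t_m)\sk - t\sk$ for the difference of the Fourier coefficients of $f_m$ and $f$, linearity of the Toeplitz operator gives $\Tn(f_m) - \Tn(f) = \Tn(f_m - f) = \Tn(g_m)$ where $g_m := f_m - f$ has $(g_m)_{ij} \to 0$ in $L^1([-\pi,\pi]^d)$. So it suffices to show $\{\Tn(g_m)\}\sn \xrightarrow{a.c.s.} \{O_{s\cdot\npr}\}\sn$, i.e.\ to produce for each $m$ a splitting $\Tn(g_m) = R_{\ven,m} + E_{\ven,m}$ with $\operatorname{rank}(R_{\ven,m}) \le c(m)\, s\,\npr$ and $\|E_{\ven,m}\| \le \omega(m)$, $c(m),\omega(m)\to 0$.

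The key construction is a truncation of the polynomial. For a fixed $m$, approximate $g_m$ in $L^1$ by a $d$-variate trigonometric matrix-valued polynomial $p_m$ of some degree $N_m$ with $\|(g_m)_{ij} - (p_m)_{ij}\|_{L^1} \le 1/m$ (for instance via Cesàro/Fejér means, which also keep the $L^1$ norm under control), and split
\begin{equation*}
\Tn(g_m) = \underbrace{\Tn(p_m)}_{\text{low rank near boundary? no --- handle differently}} + \Tn(g_m - p_m).
\end{equation*}
Actually the cleaner route, and the one I would follow, uses the two standard facts: (i) for a trigonometric polynomial $p$ of fixed degree $N$, $\Tn(p)$ differs from a banded matrix only in a set of rows/columns of size $O(N \npr / \min_\ell n_\ell)$, which is $o(\npr)$ as $\ven\to\infty$ --- but since here we need a bound uniform in $\ven$ for each fixed $m$, this is not quite what we want either. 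The genuinely correct decomposition is: $\Tn(g_m - p_m)$ is the error term, with $\|\Tn(g_m-p_m)\| \le \|\Tn(g_m-p_m)\|$ bounded via the singular-value distribution / the inequality $\|\Tn(h)\| \le \|h\|_{L^\infty}$ when $h\in L^\infty$ --- but $g_m - p_m$ need only be $L^1$. This is the crux and I address it next.

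The main obstacle is precisely that an $L^1$-small perturbation of the generating function does not give a small-norm perturbation of the Toeplitz matrix; one only controls a Schatten-$1$–type (trace) norm, which translates into a \emph{low-rank-plus-small-norm} splitting rather than a pure norm bound. The standard resolution, which I would cite or reproduce, is the classical lemma (underlying GLT4 and the theory in \cite{GaSe18}) that if $h\in L^1$ then $\{\Tn(h)\}\sn$ can for each threshold be written as rank $\le \varepsilon\,\npr$ plus norm $\le \psi(\varepsilon)$ with $\psi\to 0$; applying this with $\varepsilon = \varepsilon(m)\to 0$ chosen so that the associated norm bound also tends to $0$ (possible because $\|(g_m)_{ij}\|_{L^1}\to 0$, so the ``mass'' to be distributed between rank and norm shrinks), and then padding the block structure by a factor $s$ (which affects only constants), yields the required $R_{\ven,m}$ and $E_{\ven,m}$. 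Collecting the block index bound $\operatorname{rank}\le s\,\varepsilon(m)\npr =: c(m)\npr$ with $c(m)\to0$ and $\|E_{\ven,m}\|\le\omega(m)\to0$ gives exactly the a.c.s.\ condition in GLT5, completing the proof.
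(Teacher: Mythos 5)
The paper does not prove \cref{prop:acs_toeplitz}; it cites it from \cite{GaSe18}. So what I can do is check your plan against the standard argument, and there your overall route is right — reduce by linearity to $g_m := f_m - f$ with $(g_m)_{ij}\to 0$ in $L^1$, observe that at $L^1$ regularity one only controls a Schatten--$1$ (trace) quantity, and convert that into the rank-plus-norm splitting demanded by \textbf{GLT5}. However, the lemma you invoke to do the conversion is misstated, and as written it is false.

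You assert that for any $h\in L^1$ one can split $\Tn(h)$ into a piece of rank $\le\varepsilon\npr$ plus a piece of norm $\le\psi(\varepsilon)$ with $\psi(\varepsilon)\to 0$. Read literally (both quantities small in the same limit) this would make every $L^1$ Toeplitz sequence zero-distributed, which is absurd. The correct ingredients, which you gesture at but never pin down, are the Avram--Parter trace-norm bound $\|\Tn(h)\|_1 \le \frac{\npr}{(2\pi)^d}\|h\|_{L^1}$ (componentwise in the block case, so with an extra factor of $s$), and the elementary SVD-truncation observation: if a matrix $A$ of size $N$ has $\|A\|_1\le \delta N$, then splitting at singular-value cutoff $\sqrt{\delta}$ gives $A=R+E$ with $\operatorname{rank}(R)\le \sqrt{\delta}\,N$ (since $\operatorname{rank}(R)\sqrt{\delta}\le \|A\|_1\le\delta N$) and $\|E\|\le\sqrt{\delta}$. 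Applying this to $A=\Tn(g_m)$ with $\delta=\delta_m := s\,(2\pi)^{-d}\|g_m\|_{L^1}\to 0$ yields $c(m)=\omega(m)=\sqrt{\delta_m}\to 0$, which is exactly the a.c.s.\ requirement. The point that both rank and norm can be driven to zero simultaneously \emph{only because} $\|g_m\|_{L^1}\to 0$ is the load-bearing step; it is present in your parenthetical about the ``mass shrinking'' but must replace the false lemma, not accompany it. Finally, the middle paragraph (the Cesàro truncation of $g_m$, the ``Actually the cleaner route'' digression, the appeal to $\|\Tn(h)\|\le\|h\|_{L^\infty}$) is scaffolding you abandon; once the trace-norm estimate and the SVD truncation are in place the argument is three lines and no intermediate polynomial approximation of $g_m$ is needed, so this material should simply be cut.
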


We also give an additional characterization of zero-distributed matrix sequences that will prove useful:.
\begin{theorem}[see {\cite[Theorem 2.2]{GaSe18}}]
\label{thm:zero_dist}
Let $\{A\p\}\p$ be a sequence of matrices with $A\p$ of dimension $\npr$. Then $\{A\p\}\p\sim_{\sigma} 0$ if and only if, for every $\ven$, 
\[A\p = R\p + E\p, \qquad \lim_{\ven \to \infty} \frac{{\rm rank}(R\p)}{d_{\ven}} = 0, \qquad \lim_{\ven \to \infty} \|E\p\| = 0.\]
\end{theorem}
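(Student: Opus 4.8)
The plan is to prove the two implications of the equivalence separately, using only elementary singular-value perturbation inequalities together with the defining relation \cref{distribution:sv} in the scalar case $s=1$, $f\equiv0$, which here reads $\lim_{\ven\to\infty}\frac1{d\sn}\sum_{j=1}^{d\sn}F(\sigma_j(A\p))=F(0)$ for every $F\in\mathcal C_0(\mathbb R_0^+)$.

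\emph{Sufficiency.} Assume $A\p=R\p+E\p$ with $r\p:=\rank(R\p)$ satisfying $r\p/d\sn\to0$ and $\|E\p\|\to0$. Since $R\p$ has at most $r\p$ nonzero singular values, $\sigma_{r\p+1}(R\p)=0$, so the Weyl-type inequality $\sigma_{i+j-1}(X+Y)\le\sigma_i(X)+\sigma_j(Y)$ applied with $X=R\p$, $Y=E\p$ and $i=r\p+1$ gives $\sigma_{r\p+j}(A\p)\le\|E\p\|$ for all $j\ge1$; hence all but at most $r\p$ of the singular values of $A\p$ are $\le\|E\p\|$. Now fix $F\in\mathcal C_0(\mathbb R_0^+)$, write $M=\|F\|_\infty$, and, given $\varepsilon>0$, pick $\delta>0$ with $|F(x)-F(0)|\le\varepsilon$ on $[0,\delta]$. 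Splitting $\frac1{d\sn}\sum_j|F(\sigma_j(A\p))-F(0)|$ into its contribution from the at most $r\p$ large singular values, bounded by $2M\,r\p/d\sn$, and from the remaining ones, bounded by $\varepsilon$ as soon as $\|E\p\|\le\delta$, and then letting $\ven\to\infty$ followed by $\varepsilon\to0$, we obtain $\lim_{\ven\to\infty}\frac1{d\sn}\sum_jF(\sigma_j(A\p))=F(0)$, i.e.\ $\{A\p\}\p\sim_\sigma0$.

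\emph{Necessity.} Assume $\{A\p\}\p\sim_\sigma0$. First I would extract a bound on the counting function of the singular values: for fixed $\tau>0$, apply \cref{distribution:sv} to the continuous, compactly supported \lq\lq tent'' $F_\tau$ defined by $F_\tau(0)=1$, $F_\tau$ affine and decreasing on $[0,\tau]$, and $F_\tau\equiv0$ on $[\tau,\infty)$. Since $F_\tau(\sigma_j(A\p))=0$ whenever $\sigma_j(A\p)\ge\tau$, this yields $\#\{j:\sigma_j(A\p)<\tau\}/d\sn\ge\frac1{d\sn}\sum_jF_\tau(\sigma_j(A\p))\to F_\tau(0)=1$, hence $\#\{j:\sigma_j(A\p)\ge\tau\}/d\sn\to0$ as $\ven\to\infty$. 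Next I would truncate: for a level $\tau$, let $R\p^{(\tau)}$ be the truncated SVD of $A\p$ retaining exactly the singular values $>\tau$, and $E\p^{(\tau)}:=A\p-R\p^{(\tau)}$, so that $\|E\p^{(\tau)}\|\le\tau$ and $\rank(R\p^{(\tau)})=\#\{j:\sigma_j(A\p)>\tau\}$. By the previous step, for each $m$ there is $n_m$ (increasing in $m$) such that $\rank(R\p^{(1/m)})/d\sn\le1/m$ for all indices $n\ge n_m$ of the sequence; putting $R\p:=R\p^{(1/m)}$ and $E\p:=E\p^{(1/m)}$ for the values of $n$ with $n_m\le n<n_{m+1}$ (and handling the finitely many $n<n_1$ trivially) gives $\|E\p\|\le1/m\to0$ and $\rank(R\p)/d\sn\le1/m\to0$.

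I expect the necessity direction to be the crux. One has to convert the \lq\lq smoothed'' data carried by \cref{distribution:sv} into an honest count of singular values exceeding a threshold — which is exactly where the compact support of the admissible test functions is indispensable, as it neutralises the contribution of the (possibly unbounded) largest singular values — and then merge the thresholds $1/m$ via the diagonal argument above, being careful that \lq\lq$\ven\to\infty$'' is governed by the scalar index $n$ of the sequence (equivalently $\min_{\ell}n_\ell\to\infty$). Sufficiency, by contrast, is essentially immediate once the Weyl inequality for singular values is invoked.
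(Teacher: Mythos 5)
Your proof is correct. The paper itself does not prove this result --- it is stated as \cite[Theorem 2.2]{GaSe18} and invoked without proof --- but your argument follows the standard route used in that reference: Weyl's interlacing inequality for singular values of a sum handles sufficiency, and tent test functions together with a rank-$\tau$ SVD truncation and a diagonal extraction over thresholds $1/m$ handle necessity.
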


We next recall a result on the spectral distribution of Hankel sequences associated with $f:[-\pi, \pi]^d\rightarrow \mathbb{C}^{s\times s}$, where $f=[f_{ij}]_{i,j=1}^s$ is such that $f_{ij}\in L^1([-\pi,\pi]^d)$.

\begin{theorem}[see \cite{FaTi00}] \label{lem:hankel-zero-symbol}
Let $\{ H\sn(f) \}\sn$ be the $\ven$-th $s\times s$-block multilevel Hankel matrix associated with $f:[-\pi, \pi]^d\rightarrow \mathbb{C}^{s\times s}$, where $f=[f_{ij}]_{i,j=1}^s$ is such that $f_{ij}\in L^1([-\pi,\pi]^d)$. If $H\sn(f)$ is the $sd\sn\times sd\sn$ matrix
\begin{align*}
H_{\ven}(f)=\left[t_{\ve{i}+\vej-\ve{ 2}}\right]_{\ve{i},\vej=\ve{1}}^{\ven},
\end{align*}
with $t_{\vek}\in\C^{s\times s}$ the Fourier coefficients of $f$, then $\{ H\sn(f) \}\sn \sim_\sigma 0$. 
\end{theorem}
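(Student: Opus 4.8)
The plan is to split the argument into two cases: first $f$ a trigonometric matrix-valued polynomial, where the statement reduces to a rank count, and then general $f$ with $f_{ij}\in L^1(\Id)$, which I would reduce to the polynomial case by an approximating-class-of-sequences argument resting on a single trace-norm estimate.

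First I would assume $f$ is a $\dime$-variate trigonometric matrix-valued polynomial, so that its Fourier coefficients $t_{\vek}$ vanish whenever $\|\vek\|_\infty>N$ for some fixed $N$. In $H\sn(f)=[t_{\ve{i}+\vej-\ve{2}}]_{\ve{i},\vej=\ve{1}}^{\ven}$ the $(\ve{i},\vej)$ block can be nonzero only if $i_\ell+j_\ell-2\le N$ for every $\ell$ (the components of $\ve{i}+\vej-\ve{2}$ are nonnegative), and for each $\ell$ there are at most $(N+1)^2$ pairs $(i_\ell,j_\ell)\in\{1,\dots,n_\ell\}^2$ with $i_\ell+j_\ell\le N+2$, independently of $n_\ell$. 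Hence $H\sn(f)$ has at most $(N+1)^{2\dime}$ nonzero $s\times s$ blocks, so $\rank(H\sn(f))\le s(N+1)^{2\dime}$ uniformly in $\ven$, and \cref{thm:zero_dist} with $R\p=H\sn(f)$, $E\p=0$ gives $\{H\sn(f)\}\sn\sim_\sigma0$; equivalently, $\{H\sn(f)\}\sn\glteq O_s$ by {\bf GLT4}.

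For general $f$ I would exploit the rank-one integral representation
\[
H\sn(g)=\frac{1}{(2\pi)^{\dime}}\int_{\Id}\bigl(v(\vet)\,v(\vet)^{\top}\bigr)\otimes g(\vet)\,\dd\vet,\qquad v(\vet)=\bigl(\ee^{-\ii\langle\ve{i}-\ve{1},\vet\rangle}\bigr)_{\ve{i}=\ve{1}}^{\ven},
\]
valid whenever the entries of $g$ lie in $L^1(\Id)$, since the $(\ve{i},\vej)$ block of the integrand equals $\ee^{-\ii\langle\ve{i}+\vej-\ve{2},\vet\rangle}g(\vet)$ and integrates to $t_{\ve{i}+\vej-\ve{2}}$. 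As $v(\vet)v(\vet)^{\top}$ has rank one and trace norm $\|v(\vet)\|_2^2=\npr$, and the trace norm $\|\cdot\|_1$ is multiplicative over Kronecker products and subadditive under integration, this yields $\|H\sn(g)\|_1\le c\,\npr\sum_{i,j}\|g_{ij}\|_{L^1(\Id)}$ with $c$ depending only on $s$ and $\dime$. Choosing trigonometric polynomials $f_m$ with $(f_m)_{ij}\to f_{ij}$ in $L^1(\Id)$ and setting $\varepsilon_m=\sum_{i,j}\|f_{ij}-(f_m)_{ij}\|_{L^1(\Id)}\to0$, I get $\|H\sn(f)-H\sn(f_m)\|_1=\|H\sn(f-f_m)\|_1\le c\,\npr\varepsilon_m$; truncating the singular values of $H\sn(f-f_m)$ at level $\sqrt{c\varepsilon_m}$ splits it as $R_{\ven,m}+E_{\ven,m}$ with $\rank(R_{\ven,m})\le\sqrt{c\varepsilon_m}\,\npr$ and $\|E_{\ven,m}\|\le\sqrt{c\varepsilon_m}$. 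Thus $\{H\sn(f_m)\}\sn\xrightarrow{a.c.s.}\{H\sn(f)\}\sn$ with $c(m)=\omega(m)=\sqrt{c\varepsilon_m}\to0$. Since $\{H\sn(f_m)\}\sn\glteq O_s$ for every $m$ by the first step and the constant symbols converge trivially, {\bf GLT5} gives $\{H\sn(f)\}\sn\glteq O_s$, whence $\{H\sn(f)\}\sn\sim_\sigma0$ by {\bf GLT4}.

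The main obstacle is the trace-norm estimate $\|H\sn(g)\|_1\le c\,\npr\sum_{i,j}\|g_{ij}\|_{L^1}$: the rank-one integral representation makes it short, but one must be careful with the Kronecker/block ordering and with the passage of the trace norm under the integral sign. The remaining steps are routine bookkeeping with \cref{thm:zero_dist} and the a.c.s.\ properties {\bf GLT4}--{\bf GLT5}.
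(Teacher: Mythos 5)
The paper states this theorem without proof, citing \cite{FaTi00}, so there is no in-paper argument to compare against. Your proof is correct and essentially the classical one. For trigonometric polynomials the rank count is sound: the nonzero $s\times s$ blocks of $H\sn(f)$ are confined to a corner whose size is independent of $\ven$, giving a uniform rank bound; counting nonzero block rows rather than nonzero blocks gives the sharper $s(N+1)^d$, but either suffices since all that is used is uniformity in $\ven$. For general $f$ the rank-one integral representation $H\sn(g)=(2\pi)^{-d}\int_{\Id}\bigl(v(\vet)v(\vet)^{T}\bigr)\otimes g(\vet)\,\dd\vet$ and the resulting trace-norm estimate $\|H\sn(g)\|_1\le c\,\npr\sum_{i,j}\|g_{ij}\|_{L^1}$ are both correct, and truncating the singular values at level $\sqrt{c\varepsilon_m}$ yields exactly the rank/norm split required of an a.c.s. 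The detour through \textbf{GLT4}--\textbf{GLT5} works; it could also be bypassed by choosing $m=m(\ven)\to\infty$ slowly enough that the polynomial degree $N_{m(\ven)}$ satisfies $(N_{m(\ven)}+1)^{2d}=o(\npr)$, which turns $H\sn(f)=\bigl(H\sn(f_{m(\ven)})+R_{\ven,m(\ven)}\bigr)+E_{\ven,m(\ven)}$ into a decomposition satisfying the hypotheses of \cref{thm:zero_dist} directly, with no GLT machinery. That is a matter of taste, not a gap.
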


\begin{remark}
Note that one can equivalently  take $H_{\ven}(f)=\left[t_{\ve{ 2}-\ve{i}-\vej}\right]_{\ve{i},\vej=\ve{1}}^{\ven}$ in \cref{lem:hankel-zero-symbol}.
\end{remark}

Together, \cref{lem:hankel-zero-symbol} and {\bf GLT4}  tell us that $\{ H\sn(f) \}\sn$ is an $s\times s$-block GLT sequence with symbol $O_s$.

	We end this subsection with a theorem that is very useful in the context of preconditioning involving GLT matrix sequences. It is obtained as a straightforward extension of Theorem 1 in \cite{garoni2018} to the multilevel block GLT case, provided the symbol of the preconditioning sequence is a multiple of the identity.

\begin{theorem}\label{thm:prec}
	\begin{sloppypar}
	Let $\{A\sn\}\sn$ be a sequence of Hermitian matrices such that $\{A\sn\}\sn\sim_{\rm GLT}\kappa$, with $\kappa:G\rightarrow \mathbb{C}^{s\times s}$, $G=[0,1]^d\times[-\pi,\pi]^d$, and let $\{\Prec\sn\}\sn$ be a sequence of Hermitian positive definite matrices such that $\{\Prec\sn\}\sn\sim_{\rm GLT}h\cdot I_s$, with $h:G\rightarrow \mathbb{C}$, such that $h\ne0$ a.e. Then,
	\begin{equation*}
	\{\Prec\sn^{-1}A\sn\}\sn\sim_{\sigma,\lambda}(h^{-1}\kappa,G).
	\end{equation*}
	\end{sloppypar}
\end{theorem}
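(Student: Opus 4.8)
The plan is to mirror the proof of \cite[Theorem 1]{garoni2018}, replacing every scalar/unilevel GLT tool used there by its multilevel block counterpart among {\bf GLT1}--{\bf GLT5}; this substitution is legitimate precisely because the preconditioner symbol is $h\cdot I_s$, a scalar multiple of the identity, so that all the symbol products that occur commute. Before starting I would record two facts. First, since the $A\sn$ are Hermitian and $\{A\sn\}\sn\glteq\kappa$, the symbol $\kappa$ is Hermitian a.e., so that $h^{-1}\kappa$ is a genuine Hermitian matrix-valued function and the statement $\sim_\lambda(h^{-1}\kappa,G)$ is meaningful. Second, since the $\Prec\sn$ are Hermitian positive definite, {\bf GLT1} gives $\{\Prec\sn\}\sn\sim_\lambda(h\,I_s,G)$, whence $h\ge0$ a.e., and together with the hypothesis $h\ne0$ a.e. this forces $h>0$ a.e.

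The singular-value part is then immediate: since $h\,I_s$ is invertible a.e., {\bf GLT2} (inversion) gives $\{\Prec\sn^{-1}\}\sn\glteq h^{-1}I_s$, {\bf GLT2} (products) gives $\{\Prec\sn^{-1}A\sn\}\sn\glteq(h^{-1}I_s)\kappa=h^{-1}\kappa$, and {\bf GLT1} gives $\{\Prec\sn^{-1}A\sn\}\sn\sim_\sigma(h^{-1}\kappa,G)$. For the eigenvalues I would first reduce to a Hermitian sequence: writing $\Prec\sn^{1/2}$ for the Hermitian positive definite square root of $\Prec\sn$, one has $\Prec\sn^{-1}A\sn=\Prec\sn^{-1/2}\bigl(\Prec\sn^{-1/2}A\sn\Prec\sn^{-1/2}\bigr)\Prec\sn^{1/2}$, so $\Prec\sn^{-1}A\sn$ is similar to the Hermitian matrix $\Cgen\sn:=\Prec\sn^{-1/2}A\sn\Prec\sn^{-1/2}$; hence $\Prec\sn^{-1}A\sn$ has real eigenvalues and $\{\Prec\sn^{-1}A\sn\}\sn\sim_\lambda(\psi,G)$ if and only if $\{\Cgen\sn\}\sn\sim_\lambda(\psi,G)$. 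If I can show that $\{\Prec\sn^{\pm1/2}\}\sn\glteq h^{\pm1/2}I_s$, then {\bf GLT2} gives $\{\Cgen\sn\}\sn\glteq(h^{-1/2}I_s)\kappa(h^{-1/2}I_s)=h^{-1}\kappa$, and {\bf GLT1} (applicable since $\Cgen\sn$ is Hermitian) gives $\{\Cgen\sn\}\sn\sim_\lambda(h^{-1}\kappa,G)$; combined with the singular-value part this is exactly the claim.

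It remains to establish $\{\Prec\sn^{\pm1/2}\}\sn\glteq h^{\pm1/2}I_s$, and this is the step I expect to be the main obstacle, since {\bf GLT1}--{\bf GLT5} only give closure of the GLT class under polynomials, linear combinations and inversion, not under the square root. I would handle it by approximation: for each $m$ let $g_m:[0,\infty)\to(0,\infty)$ be the bounded continuous function equal to $x^{-1/2}$ on $[1/m,m]$, to $\sqrt m$ on $[0,1/m]$ and to $1/\sqrt m$ on $[m,\infty)$. Using closure of the GLT class under continuous functional calculus of Hermitian sequences --- itself derived from {\bf GLT2} and {\bf GLT5} by polynomial approximation, exactly as in \cite{garoni2018} --- one gets $\{g_m(\Prec\sn)\}\sn\glteq g_m(h)\,I_s$. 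The difference $\Prec\sn^{-1/2}-g_m(\Prec\sn)$ is supported on the eigenspaces of $\Prec\sn$ whose eigenvalues lie outside $[1/m,m]$, so its rank equals the number of such eigenvalues; by $\{\Prec\sn\}\sn\sim_\lambda(h\,I_s,G)$ and $0<h<\infty$ a.e., that rank, divided by the size of $\Prec\sn$, is --- up to a term vanishing as $\ven\to\infty$ --- at most $c(m):=m_\ell(G)^{-1}m_\ell(\{\vet\in G:h(\vet)\notin[1/m,m]\})$, with $c(m)\to0$ as $m\to\infty$. So, taking $R_{\ven,m}=\Prec\sn^{-1/2}-g_m(\Prec\sn)$ and $E_{\ven,m}=0$ in the a.c.s.\ condition, we get $\{g_m(\Prec\sn)\}\sn\xrightarrow{a.c.s.}\{\Prec\sn^{-1/2}\}\sn$; since $g_m(h)\to h^{-1/2}$ in measure, {\bf GLT5} yields $\{\Prec\sn^{-1/2}\}\sn\glteq h^{-1/2}I_s$, and the analogous argument (or {\bf GLT2} inversion) gives $\{\Prec\sn^{1/2}\}\sn\glteq h^{1/2}I_s$. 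In summary, the proof is essentially bookkeeping with {\bf GLT1}, {\bf GLT2} and {\bf GLT5}, the only genuinely nontrivial ingredient being the stability of the GLT class under the square root of a Hermitian positive definite sequence, which is precisely the technical core of \cite[Theorem 1]{garoni2018} and carries over unchanged here because the preconditioner symbol $h\,I_s$ is scalar.
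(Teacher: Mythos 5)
The paper does not in fact write out a proof of this theorem: it merely says the result ``is obtained as a straightforward extension of Theorem 1 in \cite{garoni2018} to the multilevel block GLT case, provided the symbol of the preconditioning sequence is a multiple of the identity.'' Your proposal essentially supplies the proof that the paper leaves to the reference, and it follows the same strategy as \cite{garoni2018}: the singular-value part is immediate from {\bf GLT2} (inversion and products) and {\bf GLT1}; the eigenvalue part is reduced by the similarity $\Prec\sn^{-1}A\sn \sim \Prec\sn^{-1/2}A\sn\Prec\sn^{-1/2}$ to a Hermitian GLT sequence; and the genuine technical point is stability of the GLT class under the matrix square root of a positive definite Hermitian sequence, which you correctly isolate and recognize as working here precisely because the preconditioner symbol $h\cdot I_s$ is a scalar multiple of the identity, so that $g(h I_s)=g(h)I_s$ makes the functional calculus commute with the symbol map. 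Your a.c.s.\ argument with the truncated functions $g_m$ and the low-rank correction on eigenspaces of $\Prec\sn$ with eigenvalues outside $[1/m,m]$ is the standard way this is carried out.

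Two points are glossed a little: (i) the step ``closure under continuous functional calculus is derived from {\bf GLT2} and {\bf GLT5} by polynomial approximation'' itself needs the same splitting into a small-norm part (polynomial error on a compact interval where most eigenvalues live) plus a low-rank part (eigenvalues outside that interval), because polynomials cannot approximate $g_m$ uniformly on an unbounded spectrum; and (ii) converting the relation $\{\Prec\sn\}\sn\sim_\lambda(h I_s,G)$ into a uniform-in-$\ven$ bound on the fraction of eigenvalues outside $[1/m,m]$ requires a small epsilon-management argument with compactly supported test functions (bounding the small eigenvalues directly, and the large ones via the distribution of $\{\Prec\sn^{-1}\}\sn$). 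Neither is a genuine gap — both are routine in GLT theory — but spelling them out would make the a.c.s.\ decomposition fully rigorous.
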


\section{Main result} \label{sec:main}
\begin{sloppypar}
In this section we prove the main result, namely that $\{\Yn\Tn(f)\}\sn {\sim_{\lambda}} (g,[-\pi,\pi]^d)$, 
where $g$ is given in \cref{eqn:g} and the dimension of $\Tn(f)$ is given by the multi-index $\ven = (n_1,\dotsc, n_d)$. 
\end{sloppypar}
We first introduce the following matrices:

\begin{itemize}
	\item $\Pi_{\ven}=\Pi_{n_1} \otimes \Pi_{n_2}\otimes \cdots \otimes \Pi_{n_d} $ with $\Pi_{n_k}$,  $n_k$ even, such that its $j$-th column $\pi_j$, $j = 1,\dotsc, n_k$, is 
	\begin{equation*}
	\label{eqn:perm}
	\pi_j = 
	\begin{cases}
	e_{2j-1}, & j=1,\dotsc, n_k/2,\\
	e_{2(j-n_k/2)}, & j= n_k/2+1,\dotsc, n_k,
	\end{cases}
	\end{equation*}
	where $e_j$, $j = 1,\dotsc, n_k$, is the $j$-th column of the identity matrix of dimension $n_k$;
		\item $\Yn=Y_{n_1} \otimes Y_{n_2}\otimes \cdots \otimes Y_{n_d} $ with $Y_{n_k}$ defined as
	\begin{equation*}
	Y_{n_k} = 
	\begin{bmatrix}
	& & 1\\
	& \iddots & \\
	1 & & 
	\end{bmatrix}_{n_k\times n_k};
	\end{equation*}
	\item $U_{\ven}=U_{n_1} \otimes U_{n_2}\otimes \cdots \otimes U_{n_d} $ with $U_{n_k}$ such that
	\begin{equation*}
	\label{eqn:un}
	U_{n_k} = \begin{bmatrix}  Y_{\lceil n_k/2\rceil} &\\ & I_{\lfloor n_k/2\rfloor} \end{bmatrix}.
	\end{equation*}
\end{itemize}

We now state an important preliminary result. 

\begin{proposition}
\label{lem:Y_GLT}
Assume that $\ven=(n_1,\ldots,n_d)$ with $n_k=2m_k$, $m_k \in \mathbb{N}$. Then, for any $f\in L^1([-\pi,\pi]^d])$, 
\[\{\Pin\Un\Yn\Tn(f)\Un\Pin^T\}\sn \sim_{\rm GLT} (g, [-\pi,\pi]^d)\text{\,\,\, with \,\,\,} 
g:=\begin{bmatrix} 0 & f\\ f^\ast & 0\end{bmatrix}.
\]
\end{proposition}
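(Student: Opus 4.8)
The plan is to reduce the multilevel statement to a tensor-product manipulation of the one-level identities established in \cite{FFHM19,MaPe19}, and then invoke the GLT machinery. First I would recall the key one-level fact: for $n_k = 2m_k$ even, there is an explicit orthogonal similarity (built from $\Pi_{n_k}$ and $U_{n_k}$) that transforms the flipped one-level Toeplitz matrix $Y_{n_k}T_{n_k}(f_k)$ into a $2\times 2$-block structured matrix whose GLT symbol is $\begin{bsmallmatrix} 0 & f_k \\ f_k^\ast & 0\end{bsmallmatrix}$; this is exactly the unilevel version of the present proposition. The work here is to show that the analogous transformation behaves correctly under Kronecker products.

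The core computation I would carry out is the following. Because $\Yn = Y_{n_1}\otimes\cdots\otimes Y_{n_d}$, $\Un = U_{n_1}\otimes\cdots\otimes U_{n_d}$, and $\Pin = \Pi_{n_1}\otimes\cdots\otimes\Pi_{n_d}$ are all Kronecker products, and using the mixed-product rule $(A\otimes B)(C\otimes D) = (AC)\otimes(BD)$, the conjugated matrix $\Pin\Un\Yn\Un\Pin^T$ factors as $\bigotimes_{k=1}^d \big(\Pi_{n_k}U_{n_k}Y_{n_k}U_{n_k}\Pi_{n_k}^T\big)$. A similar statement does \emph{not} literally hold for $\Pin\Un\Yn\Tn(f)\Un\Pin^T$ when $f$ is an arbitrary $L^1$ function, since $\Tn(f)$ is not itself a pure Kronecker product; so the argument must proceed by density. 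I would first prove the proposition for trigonometric polynomials $f = f_m$, where $\Tn(f_m)$ is a finite sum of Kronecker products $J_{n_1}^{(k_1)}\otimes\cdots\otimes J_{n_d}^{(k_d)}\otimes (\text{scalar})$, so that the whole conjugated matrix is a finite sum of Kronecker products of one-level blocks; identifying the GLT symbol of each factor via the unilevel result and using GLT2 (the $*$-algebra structure, in particular closure under products and sums) and the tensor-structure of GLT symbols gives the claim for polynomial $f_m$, with symbol converging to $g$.

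To pass from polynomials to general $f\in L^1$, I would use an approximation-class-of-sequences argument: take $f_m\to f$ in $L^1$ (componentwise), so by \cref{prop:acs_toeplitz} $\{\Tn(f_m)\}\sn \xrightarrow{a.c.s.}\{\Tn(f)\}\sn$; since $\Pin$, $\Un$ are orthogonal (uniformly bounded in spectral norm by $1$), conjugation preserves the a.c.s.\ relation, so $\{\Pin\Un\Yn\Tn(f_m)\Un\Pin^T\}\sn \xrightarrow{a.c.s.}\{\Pin\Un\Yn\Tn(f)\Un\Pin^T\}\sn$. Then GLT5 yields $\{\Pin\Un\Yn\Tn(f)\Un\Pin^T\}\sn\sim_{\rm GLT} g$ provided the polynomial symbols $g_m := \begin{bsmallmatrix}0 & f_m\\ f_m^\ast & 0\end{bsmallmatrix}$ converge to $g$ in measure, which follows from $f_m\to f$ in $L^1$.

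I expect the main obstacle to be the bookkeeping in the polynomial step: verifying that the one-level similarity $\Pi_{n_k}U_{n_k}Y_{n_k}(\cdot)U_{n_k}\Pi_{n_k}^T$ applied slot-by-slot really does turn a Kronecker summand $J_{n_1}^{(k_1)}\otimes\cdots\otimes t_{\vek}$ into something whose GLT symbol is the corresponding entry-block of $g$, and in particular tracking the anti-diagonal/flip bookkeeping so the off-diagonal blocks come out as $f$ and $f^\ast$ in the right positions rather than with spurious sign or transpose errors. A secondary subtlety worth a sentence is that the conjugating matrices here are the \emph{same} on both sides (as $\Un\Pin^T = (\Pin\Un)^T$ with $\Pin\Un$ orthogonal), so this is a genuine orthogonal similarity and the resulting sequence inherits whatever Hermitian-type structure is needed downstream; this should be remarked but requires no real computation.
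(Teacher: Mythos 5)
Your proposal follows the same route as the paper: establish the claim for trigonometric polynomials by factoring $\Tn(f_{\veq})$ as a sum of Kronecker products of unilevel factors and invoking the one-level similarity from \cite{MaPe19}, then pass to general $f\in L^1$ via density of polynomials, \cref{prop:acs_toeplitz}, and \textbf{GLT5}, using that $\Pin\Un$ is orthogonal so conjugation preserves the a.c.s.\ relation. The closing remark that $\Un\Pin^T=(\Pin\Un)^T$ is an orthogonal similarity, hence preserves symmetry, is also exactly what the paper uses downstream.

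The place where your sketch is thinner than it needs to be is the polynomial step, and you have correctly flagged it as the main obstacle. Appealing to ``\textbf{GLT2} and the tensor-structure of GLT symbols'' does not settle it: \textbf{GLT2} as stated covers linear combinations, products, inverses and adjoints of matrix sequences of the \emph{same} dimension, and gives no rule for Kronecker products. Worse, if one tries to tensor the one-level $2\times2$ symbols $\begin{bmatrix}0 & \ee^{\ii j_k\theta_k}\\ \ee^{-\ii j_k\theta_k} & 0\end{bmatrix}$ across the $d$ factors, the naive outcome is a $2^d\times 2^d$ block symbol, not the $2\times 2$ matrix $g$; the two disagree already for $d=2$, since the $2\times2$ antidiagonal blocks do not remain $2\times2$ antidiagonal under Kronecker products. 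What the paper actually does at this point is prove an explicit matrix identity, namely that after conjugation by $\Pin\Un$ the polynomial matrix equals $\Tn\left(\begin{bmatrix}0 & f_{\veq}\\ f_{\veq}^\ast & 0\end{bmatrix}\right)$ up to terms $R_{\ven}$ of negligible relative rank and $E_{\ven}$ of vanishing norm (equation \cref{eq:pol}); once that identity is in hand, \textbf{GLT3}, \textbf{GLT4} and \cref{thm:zero_dist} give the GLT symbol directly, with no need for any Kronecker-product axiom. So your overall strategy is sound and coincides with the paper's, but the claim ``GLT2 plus tensor structure gives the polynomial case'' should be replaced by the concrete verification of \cref{eq:pol} (equivalently, the computation in \cref{rem:distrY}, which first shows $\Pin\Un\Yn\Un\Pin^T=\Tn\left(\begin{bmatrix}0&1\\1&0\end{bmatrix}\right)$ and $\{\Pin\Un\Tn(f)\Un\Pin^T\}\glteq\begin{bmatrix}f^\ast&0\\0&f\end{bmatrix}$, and only then applies \textbf{GLT2} to their product — a step that \emph{is} a same-size product and hence within the scope of \textbf{GLT2}).
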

\begin{proof}
Let us first assume that $f(\vet)=f_{\veq}(\vet)=\sum_{\vej=-\veq}^{\veq}t_{\vej}{\rm e}^{\ii\langle\vej,\vet\rangle}$. Then,

\begin{align*}
\Pin\Un\Yn\Tn(f_{\veq})\Un\Pin^T&\\
&\hspace{-2.4cm}=\Pin\Un\Yn\left(\sum_{\vej=-\veq}^{\veq}t_{\vej}\Tn\left(\prod_{k=1}^d{\rm e}^{\ii j_k\theta_k}\right)\right)\Un\Pin^T\\
&\hspace{-2.4cm}=\Pin\Un\Yn\left(\sum_{\vej=-\veq}^{\veq}t_{\vej}T_{n_1}({\rm e}^{\ii j_1\theta_1})\otimes \cdots \otimes T_{n_d}({\rm e}^{\ii j_d\theta_d})\right)\Un\Pin^T\\
&\hspace{-2.4cm}=\sum_{\vej=-\veq}^{\veq}t_{\vej}\,\,\Pi_{n_1}U_{n_1}Y_{n_1}T_{n_1}({\rm e}^{\ii j_1\theta_1})U_{n_1}\Pi_{n_1}^T\otimes \cdots \otimes \Pi_{n_d}U_{n_d}Y_{n_d}T_{n_d}({\rm e}^{\ii j_d\theta_d})U_{n_d}\Pi^T_{n_d}.
\end{align*}
Now, by using Lemmas 3.1 and 3.2 in \cite{MaPe19} applied to $f(\theta_k)={\rm e}^{\ii j_k\theta_k}$ we find that
\begin{align*}
\Pi_{n_k}U_{n_k}Y_{n_k}T_{n_k}({\rm e}^{\ii j_k\theta_k})U_{n_k}\Pi_{n_k}^T&=T_{n_k}\left(\begin{bmatrix} 0 & {\rm e}^{\ii j_k\theta_k}\\ {\rm e}^{-\ii j_k\theta_k} & 0\end{bmatrix}\right)+ E_{n_k}+R_{n_k}
\end{align*}
with 
\[\lim_{n_k\rightarrow\infty} \frac{{\rm rank}(R_{n_k})}{n_k}=0, \,\, \lim_{n_k\rightarrow\infty}\|E_{n_k}\|=0.\]
Therefore,
\begin{align}\label{eq:pol}
\notag \Pin\Un\Yn\Tn(f_{\veq})\Un\Pin^T & \\
&\notag\hspace{-2.5cm}=\sum_{\vej=-\veq}^{\veq}t_{\vej}\,\,\left(T_{n_1}\left(\begin{bmatrix} 0 & {\rm e}^{\ii j_1\theta_1}\\ {\rm e}^{-\ii j_1\theta_1} & 0\end{bmatrix}\right)+ E_{n_1}+R_{n_1}\right)\otimes \cdots \\
&\notag\hspace{-0.5cm}\cdots\otimes \left(T_{n_d}\left(\begin{bmatrix} 0 & {\rm e}^{\ii j_d\theta_d}\\ {\rm e}^{-\ii j_d\theta_d} & 0\end{bmatrix}\right)+ E_{n_d}+R_{n_d}\right)\\ &\notag \hspace{-2.5cm}=\sum_{\vej=-\veq}^{\veq}t_{\vej}\,\, T_{n_1}\left(\begin{bmatrix} 0 & {\rm e}^{\ii j_1\theta_1}\\ {\rm e}^{-\ii j_1\theta_1} & 0\end{bmatrix}\right)\otimes \cdots\otimes T_{n_d}\left(\begin{bmatrix} 0 & {\rm e}^{\ii j_d\theta_d}\\ {\rm e}^{-\ii j_d\theta_d} & 0\end{bmatrix}\right)+R_{\ven}+E_{\ven}\\
& \hspace{-2.5cm}=\Tn\left(\begin{bmatrix} 0 & f_{\veq}\\ f_{\veq}^{\ast} & 0\end{bmatrix}\right)+R_{\ven}+E_{\ven}
\end{align}
with
\[\lim_{\ven\rightarrow\infty} \frac{{\rm rank}(R_{\ven})}{d_{\ven}}=0, \,\, \lim_{\ven\rightarrow\infty}\|E_{\ven}\|=0,\]
or equivalently 
\begin{equation*}
{\left\{\Tn\left(\begin{bmatrix} 0 & f_{\veq}\\ f_{\veq}^{\ast} & 0\end{bmatrix}\right)\right\}\sn}\xrightarrow{a.c.s.} \{\Pin\Un\Yn\Tn(f_{\veq})\Un\Pin^T\}\sn.
\end{equation*}
Thanks to \textbf{GLT2--4} and \cref{thm:zero_dist} the thesis is proven for $f=f_{\veq}$ a trigonometric polynomial.

Let us now switch to a generic $f\in L^{1}([-\pi,\pi]^d)$. It is well known that the set of $d$-variate polynomials is dense in $L^1([-\pi,\pi]^d)$. Therefore, there exists a sequence of polynomials $f_m:[-\pi,\pi]^d\rightarrow \mathbb{C}$ such that $f_m\rightarrow f\in L^{1}([-\pi,\pi]^d)$. By \cref{prop:acs_toeplitz} $\{\Tn(f_m)\}_n{\xrightarrow{a.c.s.}}\{\Tn(f)\}\sn$ i.e., for every $m$ there exists $n_m$ such that, for $n>n_m$,
\begin{equation*}
\Tn(f)=\Tn(f_{m})+R_{\ven,m}+E_{\ven,m}
\end{equation*}
with 
\begin{equation*}
{\rm rank}(R_{\ven,m})<c(m)d_{\ven}, \quad \|E_{\ven,m}\|\le\omega(m), \quad \lim_{m\rightarrow\infty}c(m),\omega(m)=0. 
\end{equation*}
Now, by \cref{eq:pol} we have
\begin{align*}
\Pin\Un\Yn\Tn(f)\Un\Pin^T&\\
&\hspace{-2.2cm}=\Pin\Un\Yn\Tn(f_{m})\Un\Pin^T  + \Pin\Un\Yn R_{\ven,m} \Un\Pin^T+ \Pin\Un\Yn E_{\ven,m} \Un\Pin^T\\
&\hspace{-2.2cm}=  \Tn\left(\begin{bmatrix} 0 & f_{m}\\ f_{m}^{\ast} & 0\end{bmatrix}\right)+\underbrace{R_{\ven}  + \Pin\Un\Yn R_{\ven,m} \Un\Pin^T}_{\tilde{R}}+ \underbrace{E_{\ven}+ \Pin\Un\Yn E_{\ven,m} \Un\Pin^T}_{\tilde{E}}
\end{align*}
with  
\begin{align*}
{\rm rank}({\tilde{R}})<\tilde{c}(m)d_{\ven}, \quad \|{\tilde{E}}\|\le\tilde{\omega}(m),\quad \lim_{m\rightarrow\infty}\tilde{c}(m),\tilde{\omega}(m)=0. 
\end{align*}
Then, 
\begin{equation*}
\left\{\Tn\left(\begin{bmatrix} 0 & f_{m}\\ f_{m}^{\ast} & 0\end{bmatrix}\right)\right\}_n{\xrightarrow{a.c.s.}}\{\Pin\Un\Yn\Tn(f)\Un\Pin^T\}\sn.
\end{equation*}
This together with $\left\{\begin{bmatrix} 0 & f_{m}\\ f_{m}^{\ast} & 0\end{bmatrix}\right\}_m\rightarrow \begin{bmatrix} 0 & f\\ f^{\ast} & 0\end{bmatrix}$ with $f\in L^1([-\pi,\pi]^d)$, and \textbf{GLT3} and \textbf{GLT5}, concludes the proof.
\end{proof} 

\begin{remark}\label{rem:odd}
Assume that $n_k=2m_k+1$ with $m_k\in\mathbb{N}$. Then, $U_{n_k}Y_{n_k}T_{n_k}({\rm e}^{\ii j_k\theta_k})U_{n_k}$ can be embedded into the $(2m_k+2)\times(2m_k+2)$ matrix
\begin{align*}
A_{n_k+1}&\\
& =\begin{bmatrix}H_{m_k+1}({\rm e}^{\ii j_k\theta_k})& T_{m_k+1}({\rm e}^{\ii j_k\theta_k}) \\ T_{m_k+1}({\rm e}^{-\ii j_k\theta_k})& H_{m_k+1}({\rm e}^{-\ii j_k\theta_k})\end{bmatrix}\\
&=\Pi_{n_k+1}^TT_{2m_k+2}\left(\begin{bmatrix} 0 & {\rm e}^{\ii j_k\theta_k}\\ {\rm e}^{-\ii j_k\theta_k} & 0\end{bmatrix}\right)\Pi_{n_k+1}\\
&\hspace{0.5cm}+\Pi_{n_k+1}^TH_{2m_k+2}\left(\begin{bmatrix}  {\rm e}^{\ii j_k\theta_k} & 0\\ 0 & {\rm e}^{-\ii j_k\theta_k}  \end{bmatrix}\right)\Pi_{n_k+1}\\
&=\Pi_{n_k+1}^TT_{2m_k+2}\left(\begin{bmatrix} 0 & {\rm e}^{\ii j_k\theta_k}\\ {\rm e}^{-\ii j_k\theta_k} & 0\end{bmatrix}\right)\Pi_{n_k+1}\\
&\hspace{0.5cm}+ R_{2m_k+2} +E_{2m_k+2}
\end{align*}
where $H(\cdot)$ is the (block) Hankel matrix generated by the function specified by the brackets and the last equality follows from \cref{lem:hankel-zero-symbol} combined with \cref{thm:zero_dist}.
Specifically, 
\begin{equation*}
U_{n_k}Y_{n_k}T_{n_k}({\rm e}^{\ii j_k\theta_k})U_{n_k}=PA_{n_k+1}P^T,
\end{equation*}
with 
\begin{equation*}
P=\begin{bmatrix}I_{m+1}& {\bf 0}&  O_{(m+1) \times m} \\ O_{m\times (m+1)} & {\bf 0} & I_m\end{bmatrix} \text{ and } {\bf 0}=(0, \ldots, 0)^T.
\end{equation*}
On this basis, using the same line of proof as for \cref{lem:Y_GLT} shows that the matrix $\Un\Yn\Tn(f)\Un$ is a principal submatrix of a matrix that, after a proper permutation, gives rise to a GLT sequence whose symbol is $g$. 
\end{remark}

\begin{remark}\label{rem:perm_T}
Assume that $\ven=(n_1,\ldots,n_d)$ with $n_k=2m_k$, $m_k \in \mathbb{N}$. Then, the  line of proof used for \cref{lem:Y_GLT} shows that
\begin{equation*}\label{eq:perm_T}
\{\Pin\Tn(f)\Pin^T\}\sim_{\rm GLT}\begin{bmatrix} f & 0\\ 0& f\end{bmatrix}
\end{equation*}
and
\begin{equation}\label{eq:perm_UTU}
\{\Pin\Un\Tn(f)\Un\Pin^T\}\sim_{\rm GLT}\begin{bmatrix}f^{\ast} & 0\\ 0 & f\end{bmatrix}.
\end{equation}
\end{remark}
\begin{remark}\label{rem:distrY}
Assume that $\ven=(n_1,\ldots,n_d)$ with $n_k=2m_k$, $m_k \in \mathbb{N}$. Then,
\begin{align*}
\Pin\Un\Yn\Un\Pin^T&=\Pi_{n_1}U_{n_1}Y_{n_1}U_{n_1}\Pi_{n_1}^T\otimes \cdots \otimes \Pi_{n_d}U_{n_d}Y_{n_d}U_{n_d}\Pi^T_{n_d}\\
&=T_{n_1}\left(\begin{bmatrix} 0 & 1\\ 1 & 0\end{bmatrix}\right)\otimes \cdots \otimes T_{n_d}\left(\begin{bmatrix} 0 & 1\\ 1 & 0\end{bmatrix}\right)\\
&=\Tn\left(\begin{bmatrix} 0 & 1\\ 1 & 0\end{bmatrix}\right).
\end{align*}
Hence, using \cref{eq:perm_UTU} we arrive at the same result as in \cref{lem:Y_GLT}, i.e.,
\begin{align*}
\{\Pin\Un\Yn\Tn(f)\Un\Pin^T\}\\
&\hspace{-1.2cm}=\{\Pin\Un\Yn\Un\Pin^T\Pin\Un\Tn(f)\Un\Pin^T\}\sim_{\rm GLT}\begin{bmatrix} 0 & 1\\ 1 & 0\end{bmatrix}\begin{bmatrix}f^{\ast} & 0\\ 0 & f\end{bmatrix}=g.
\end{align*}
\end{remark}
We can now state the main theorem of this section, which describes the spectral distribution of $\{\Yn\Tn(f)\}\sn$. 
\begin{theorem}
\label{thm:main}
Let $\{\Tn(f)\}\sn$, $\Tn(f)\in\mathbb{R}^{\npr\times \npr}$ be the multilevel Toeplitz sequence associated with $f \in L^1([-\pi,\pi]^d)$, \
where $\ven=(n_1,\ldots,n_d)$ and $\npr = n_1\dotsm n_\dime$.
Let $\{\Yn\Tn(f)\}\sn$ be the corresponding sequence of flipped Toeplitz matrices. Then,
\begin{align}\label{eq:distrg}
\{\Yn\Tn(f)\}\sn\sim_\lambda (g,[-\pi,\pi]^d), \text{\,\,\, with \,\,\,} 
g=\begin{bmatrix} 0 & f\\ f^\ast & 0\end{bmatrix}.
\end{align}
\end{theorem}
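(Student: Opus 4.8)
The plan is to reduce the spectral distribution of $\{\Yn\Tn(f)\}\sn$ to that of a Hermitian GLT sequence whose symbol is the Hermitian matrix-valued function $g$, and then invoke \textbf{GLT1} (or \cref{szego-herm}) to read off the eigenvalue distribution. The point is that $\Yn\Tn(f)$ is itself not Hermitian, so \cref{szego} does not apply directly; instead I would exploit the fact that $\Yn\Tn(f)$ is \emph{similar} to a Hermitian matrix via a real orthogonal transformation built from the $U_{n_k}$ and $\Pi_{n_k}$ blocks introduced above.

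First I would treat the case in which every $n_k$ is even, say $n_k = 2m_k$. Here \cref{lem:Y_GLT} already gives $\{\Pin\Un\Yn\Tn(f)\Un\Pin^T\}\sn \glteq (g,[-\pi,\pi]^d)$. The matrices $\Un$ and $\Pin$ are real orthogonal (being Kronecker products of permutation-type and flip-type orthogonal matrices), so $\Pin\Un\Yn\Tn(f)\Un\Pin^T$ is similar to $\Yn\Tn(f)$; in particular the two sequences have exactly the same eigenvalues. Moreover $g$ is Hermitian, so by \textbf{GLT1} the GLT sequence $\{\Pin\Un\Yn\Tn(f)\Un\Pin^T\}\sn$ is distributed in the eigenvalue sense as $(g,[-\pi,\pi]^d)$ — here I should check that $\Pin\Un\Yn\Tn(f)\Un\Pin^T$ is actually Hermitian (equivalently that $\Yn\Tn(f)$ is, which follows from $\Yn^T = \Yn$, $\Yn^2 = I$, and $\Tn(f)$ real together giving $(\Yn\Tn(f))^* = \Tn(f)^T\Yn = \Tn(f^*)^T\Yn$; one uses that for real $f$ the matrix is symmetric, and for general $f$ one passes to the real and imaginary parts). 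Since eigenvalue distribution only depends on the multiset of eigenvalues, \cref{eq:distrg} follows for even $\ven$.

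Next I would remove the evenness restriction. For general $\ven$, write each $n_k = 2m_k$ or $2m_k+1$. Following \cref{rem:odd}, when $n_k$ is odd the block $U_{n_k}Y_{n_k}T_{n_k}(\cdot)U_{n_k}$ embeds as a principal submatrix $P A_{n_k+1} P^T$ of a matrix of even size that, after permutation by $\Pi_{n_k+1}$, differs from $T_{2m_k+2}(g_k)$ by a zero-distributed (low-rank plus small-norm) perturbation coming from the Hankel terms via \cref{lem:hankel-zero-symbol} and \cref{thm:zero_dist}. Taking Kronecker products over $k$, I obtain that $\Yn\Tn(f)$ is (up to an orthogonal similarity) a principal submatrix, cut out by a projection $P\sn$ with $P\sn^*P\sn = I$ and $\delta\sn/d\sn \to 1$, of a Hermitian matrix $\widetilde{A}\sn$ with $\{\widetilde{A}\sn\}\sn \glteq (g,[-\pi,\pi]^d)$. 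Then \cref{th:extradimensional} transfers the eigenvalue distribution from the big sequence to the compressed sequence, giving $\{\Yn\Tn(f)\}\sn \sim_\lambda (g,[-\pi,\pi]^d)$ in full generality.

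The main obstacle is the bookkeeping in the mixed even/odd case: one must verify carefully that the Kronecker product of the per-level embeddings $P A_{n_k+1}P^T$ (even levels being the trivial embedding with $P = I$) is again a \emph{principal submatrix} of the Kronecker product of the enlarged blocks — this uses that $(P_1\otimes\cdots\otimes P_d)(A_1\otimes\cdots\otimes A_d)(P_1\otimes\cdots\otimes P_d)^T = (P_1A_1P_1^T)\otimes\cdots\otimes(P_dA_dP_d^T)$ and that a Kronecker product of coordinate projections is a coordinate projection — and that the enlarged Kronecker-product sequence is GLT with symbol $g$, which in turn requires running the a.c.s.\ argument of \cref{lem:Y_GLT} with the extra Hankel error terms absorbed into $R_{\ven}+E_{\ven}$ exactly as in \cref{rem:odd}, together with the fact that $\delta\sn/d\sn = \prod_k n_k / \prod_k (n_k + [n_k \text{ odd}]) \to 1$ as $\ven\to\infty$. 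Once these structural points are in place, the spectral conclusion is immediate from \textbf{GLT1} and \cref{th:extradimensional}. Finally, as a sanity check, one notes the eigenvalues of $g(\vet)$ are $\pm|f(\vet)|$, recovering the announced $\pm|f|$ distribution.
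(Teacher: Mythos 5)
Your approach is essentially identical to the paper's: in the even case you combine \cref{lem:Y_GLT} with \textbf{GLT1} and the orthogonal similarity by $\Pin\Un$; in the general case you invoke the embedding of \cref{rem:odd} and \cref{th:extradimensional}. The one flaw is your parenthetical justification that $\Yn\Tn(f)$ (equivalently $\Pin\Un\Yn\Tn(f)\Un\Pin^T$) is Hermitian: the chain $(\Yn\Tn(f))^\ast = \Tn(f)^T\Yn = \Tn(f^\ast)^T\Yn$ never returns to $\Yn\Tn(f)$, and the suggestion to ``pass to real and imaginary parts of $f$'' misreads the hypothesis --- the theorem assumes $\Tn(f)$ real (i.e.\ real Fourier coefficients), not $f$ real-valued, and indeed $f$ is complex-valued in the paper's Examples~2 and~3. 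What you actually need is persymmetry of (multilevel) Toeplitz matrices, $\Yn\Tn(f)\Yn = \Tn(f)^T$, which together with $\Yn=\Yn^T=\Yn^{-1}$ and $\Tn(f)$ real gives $(\Yn\Tn(f))^T=\Tn(f)^T\Yn=\Yn\Tn(f)\Yn\Yn=\Yn\Tn(f)$, so $\Yn\Tn(f)$ is real symmetric and \textbf{GLT1} applies; with that repair the proof is complete and matches the paper's.
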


\begin{proof}
In the case that $n_k = 2m_k$, $m_k \in \mathbb{N}$ for each $k$, we see from \cref{lem:Y_GLT} 
that $\{\Pin\Un\Yn\Tn(f)\Un\Pin^T\} \sim_{\rm GLT} g$. Hence, recalling that $\Pin\Un\Yn\Tn(f)\Un\Pin^T$ is real symmetric, by \textbf{GLT1},
$\{\Yn\Tn(f)\} \sim_{\lambda} g$.
In all other cases, by recalling \cref{rem:odd} and using \cref{th:extradimensional} we find that the thesis follows as well.
\end{proof}

We end this section by providing the spectral distribution of a preconditioned sequence of flipped multilevel Toeplitz matrices.

\begin{theorem}\label{thm:precflip}
	Let $\{\Tn(f)\}\sn$,  $\Tn(f)\in\mathbb{R}^{\npr\times \npr}$ with $n_k=2m_k$, $m_k\in\mathbb{N}$ be the multilevel Toeplitz sequence associated with $f\in L^{1}([-\pi,\pi]^d)$, let $\{\Yn\Tn(f)\}\p$ be the corresponding sequence of flipped multilevel Toeplitz matrices, and let $\{\Prec\p\}\p$ be a sequence of Hermitian positive definite matrices such that $\{\Prec\p\}\p\sim_{\rm GLT}h$, and $\{\Pin\Un\Prec\p\Un\Pin^T\}\p\sim_{\rm GLT}h\cdot I_2$ with $h:[-\pi,\pi]^d\rightarrow\mathbb{C}$ and $h\ne0$ a.e. Then,
	\begin{equation}\label{eq:precond}
	\{\Prec\p^{-1}\Yn\Tn(f)\}\p\sim_{\lambda}(h^{-1}g,[-\pi,\pi]^d).
	\end{equation}
\end{theorem}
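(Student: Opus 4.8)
The plan is to conjugate $\Prec\p^{-1}\Yn\Tn(f)$ by a suitable real orthogonal matrix so that it becomes a genuine product of two $2\times 2$-block GLT sequences, and then invoke \cref{thm:prec}. The two facts that do the work are \cref{lem:Y_GLT} (the conjugation $\Pin\Un(\cdot)\Un\Pin^T$ sends $\Yn\Tn(f)$ to a real symmetric $2\times 2$-block GLT sequence with symbol $g$) and the hypothesis that the \emph{same} conjugation sends the preconditioner to a $2\times 2$-block GLT sequence with the scalar symbol $h\cdot I_2$.

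First I would record that $\Yn\Tn(f)$ is real symmetric: multilevel Toeplitz matrices are persymmetric, $\Yn\Tn(f)\Yn=\Tn(f)^T$, so $(\Yn\Tn(f))^T=\Tn(f)^T\Yn=\Yn\Tn(f)$, and the entries are real since $\Tn(f)\in\mathbb{R}^{\npr\times\npr}$. Since $n_k=2m_k$, each $\Pi_{n_k}$ and each $U_{n_k}$ is real orthogonal, hence so are $\Pin$ and $\Un$; in particular $\Un\Pin^T$ is real orthogonal with inverse $\Pin\Un$. Inserting $I=\Un\Pin^T\Pin\Un$ between $\Prec\p^{-1}$ and $\Yn\Tn(f)$ gives
\[
\Pin\Un\,\Prec\p^{-1}\Yn\Tn(f)\,\Un\Pin^T=\bigl(\Pin\Un\Prec\p\Un\Pin^T\bigr)^{-1}\bigl(\Pin\Un\Yn\Tn(f)\Un\Pin^T\bigr)=:B\p^{-1}A\p ,
\]
and the left-hand side is a similarity transform of $\Prec\p^{-1}\Yn\Tn(f)$ by $\Un\Pin^T$, so the two matrices have the same eigenvalues (with multiplicities) and the same size for every $\ven$.

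It then remains to determine the eigenvalue distribution of $\{B\p^{-1}A\p\}\p$. Since $\Un\Pin^T$ is real orthogonal, conjugation by it preserves eigenvalues, Hermiticity and positive definiteness, so $A\p$ is real symmetric (hence Hermitian) and $B\p$ is Hermitian positive definite. By \cref{lem:Y_GLT}, $\{A\p\}\p\sim_{\rm GLT}g$, and by hypothesis $\{B\p\}\p\sim_{\rm GLT}h\cdot I_2$ with $h\neq 0$ a.e. Applying \cref{thm:prec} with $s=2$, $\kappa=g$ and preconditioning symbol $h\cdot I_2$ yields $\{B\p^{-1}A\p\}\p\sim_{\sigma,\lambda}(h^{-1}g,\Id)$, and in particular $\{B\p^{-1}A\p\}\p\sim_{\lambda}(h^{-1}g,\Id)$. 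Because $B\p^{-1}A\p$ and $\Prec\p^{-1}\Yn\Tn(f)$ are similar for every $\ven$, the relation \cref{distribution:eig} holds for one sequence iff it holds for the other, so $\{\Prec\p^{-1}\Yn\Tn(f)\}\p\sim_{\lambda}(h^{-1}g,\Id)$, as claimed.

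The argument is mostly bookkeeping once \cref{lem:Y_GLT} and \cref{thm:prec} are in hand; the one genuinely load-bearing point is why the hypothesis is phrased through $\{\Pin\Un\Prec\p\Un\Pin^T\}\p$ rather than merely through $\{\Prec\p\}\p$: \cref{thm:prec} requires the preconditioning symbol to be a scalar multiple of $I_s$, and with $s=2$ that is met exactly by the conjugated preconditioner $B\p$, paired with the conjugated flipped Toeplitz sequence $A\p$ of matching block size. One should also check the routine facts that $\Pi_{n_k}$ and $U_{n_k}$ are orthogonal and that real orthogonal conjugation preserves Hermiticity, positive definiteness and the spectrum. Finally, the evenness assumption $n_k=2m_k$ is precisely what makes $\Pin$ and $\Un$ (and hence this whole argument) well defined and \cref{lem:Y_GLT} directly applicable; relaxing it would require combining \cref{rem:odd} with \cref{th:extradimensional}, exactly as in the proof of \cref{thm:main}.
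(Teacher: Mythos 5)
Your proof is correct and matches the paper's own argument: both conjugate $\Prec\p^{-1}\Yn\Tn(f)$ by $\Pin\Un$ to produce $(\Pin\Un\Prec\p\Un\Pin^T)^{-1}(\Pin\Un\Yn\Tn(f)\Un\Pin^T)$, then apply \cref{lem:Y_GLT} and \cref{thm:prec} together with the realness/symmetry of $\Yn\Tn(f)$ and the orthogonality of $\Pin\Un$. You simply spell out a few steps the published proof leaves implicit (persymmetry, similarity preserving the eigenvalue distribution, why the hypothesis on $\Pin\Un\Prec\p\Un\Pin^T$ is needed).
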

\proof 
The thesis follows from the combination of \cref{thm:prec} and \cref{lem:Y_GLT} by noticing that
\begin{equation*}
(\Pin\Un\Prec\p\Un\Pin^T)^{-1}\Un\Pin\Yn\Tn(f)\Un\Pin^T=\Pin \Un\Prec\p^{-1}\Yn\Tn(f)\Un\Pin^T
\end{equation*}
and by recalling that $\Yn\Tn(f)$ is real symmetric and that $\Pin\Un$ is orthogonal.
\endproof
\begin{sloppypar}
Note that, thanks to \cref{rem:perm_T}, the hypotheses of \cref{thm:precflip} are satisfied in the case where $\Prec\sn=\Tn(h)$, with $h\ge0$ and $h\ne0$ a.e. Moreover, it easy to see that if we take the following circulant preconditioner $$\mathcal{P}\sn=C_{\ven}=|C_{n_d}| \otimes \cdots \otimes I_{n_1}+ \cdots + I_{n_d} \otimes \cdots \otimes |C_{n_1}|$$ with $|C_{n_\ell}|=(C_{n_\ell}^T C_{n_\ell})^{\frac12}$ where $C_{n_\ell}$ is the optimal preconditioner for $T_{n_\ell}=T_{n_\ell}(f_{\ell})$, the condition $\{\Pin\Un C_{\ven}\Un\Pin^T\}\p\sim_{\rm GLT}h\cdot I_2$ holds as well. This is because both $\{|C_{n_{\ell}}|\}_{n_\ell}$ and $\{T(|f_{\ell}|)\}_{n_\ell}$ are GLTs with symbol $|f_\ell|$ and then  $|C_{n_{\ell}}|=T(|f_{\ell}|)+R_{n_{\ell}}+E_{n_{\ell}}$ which allows us to apply the same reasoning as in \cref{rem:perm_T} to prove the desired relation.
\end{sloppypar}

\section{Numerical results} \label{sec:num}
In this section we illustrate the theoretical results from \cref{sec:main}, that is, we check the validity of  \cref{thm:main,thm:precflip}. We start by defining the following equispaced grid on $[0,\pi]^d$:
\begin{eqnarray*}
\Gamma=\left\{(\theta^{(k_1)}_1,\ldots,\theta^{(k_d)}_{d}):=\left(\frac{\pi k_1}{\left\lfloor\frac{n_1}{2}\right\rfloor-1},\frac{\pi k_2}{n_2-1},\ldots,\frac{\pi k_d}{n_d-1}\right),
\begin{array}{rl}
k_1=&0,\ldots,\left\lfloor\frac{n_1}{2}\right\rfloor-1,\\
k_j=&0, \dots, n_j-1,\\
j=&2,\ldots,d
\end{array}
\right\}.
\end{eqnarray*}
Then, we denote by $\Lambda_1$ and $\Lambda_2$ the set of all evaluations of $\lambda_1(g)$, $\lambda_2(g)$ (resp.\ $\lambda_1(h^{-1}g)$, $\lambda_2(h^{-1}g)$) on $\Gamma$, and by $\Lambda$ the union $\Lambda_1\cup\Lambda_2$ ordered in an ascending way. In the following examples we numerically check relation \cref{eq:distrg} (resp.\ \cref{eq:precond}) by comparing the eigenvalues of $\Yn\Tn(f)$ (resp. $\Prec_n^{-1}\Yn\Tn(f)$) with the values collected in $\Lambda$. Note that it suffices to consider only $[0,\pi]^d$ in place of $[-\pi,\pi]^d$ because the eigenvalue functions of the considered symbols are even. 

In the two-dimensional  \cref{ex:1,ex:2} we also compare the eigenvalues of $\Yn\Tn(f)$ directly with the spectrum of $g$ over the whole domain $[-\pi,\pi]^2$. Precisely, we define the following grid on $[-\pi,\pi]^2$ 
\begin{eqnarray*}
\Delta=\left\{(\theta^{(\ell)}_1,\theta^{(\kappa)}_{2}):=\left(-\pi+\frac{2\pi \ell}{n_1-1},-\pi+\frac{2\pi \kappa}{n_2-1}\right),
\begin{array}{rl}
\ell=&0, \dots, n_1-1,\\
\kappa=&0, \dots, n_2-1
\end{array}
\right\}
\end{eqnarray*}
and again we denote by $\Lambda_1$ and $\Lambda_2$ the sets of all evaluations of $\lambda_1(g)$, $\lambda_2(g)$ on $\Delta$, and by $\Lambda$ the union $\Lambda_1\cup\Lambda_2$ ordered in an ascending way. Therefore, we employ the following matching algorithm: for a fixed eigenvalue $\lambda$ of $\Yn\Tn(f)$
\begin{enumerate}
	\item we find $\tilde{\eta}\in \Lambda$ such that $\|\lambda-\tilde{\eta}\|=\min_{\eta\in\Lambda}\|\lambda-\eta\|$, and
	\item we associate $\lambda$ to the couple in $\Delta$ that corresponds to $\tilde{\eta}$.
\end{enumerate}

\begin{example} \label{ex:1}
The first example we consider is the $2$-level banded Toeplitz matrix generated by $f(\theta_1,\theta_2) = 4+\ee^{\ii\theta_1} + \ee^{\ii\theta_2}$. 
We see from \cref{fig:ex1} that the uniform sampling of eigenvalue functions of $g$ collected in $\Lambda$ accurately describes the eigenvalues of $\Yn\Tn(f)$, even for very small matrices. Moreover, as shown in \cref{fig:ex1_surface} (obtained using the aforementioned matching algorithm), the eigenvalues of $\Yn\Tn(f)$  accurately mimic the shape of the eigenvalue functions of $g$ when $n_1=20, n_2=40$.

\begin{figure}[htbp]
	\centering
	\begin{subfigure}[b]{0.48\textwidth}
		\includegraphics[width = \textwidth,clip=true,trim = 0cm 7cm 0cm 7cm]{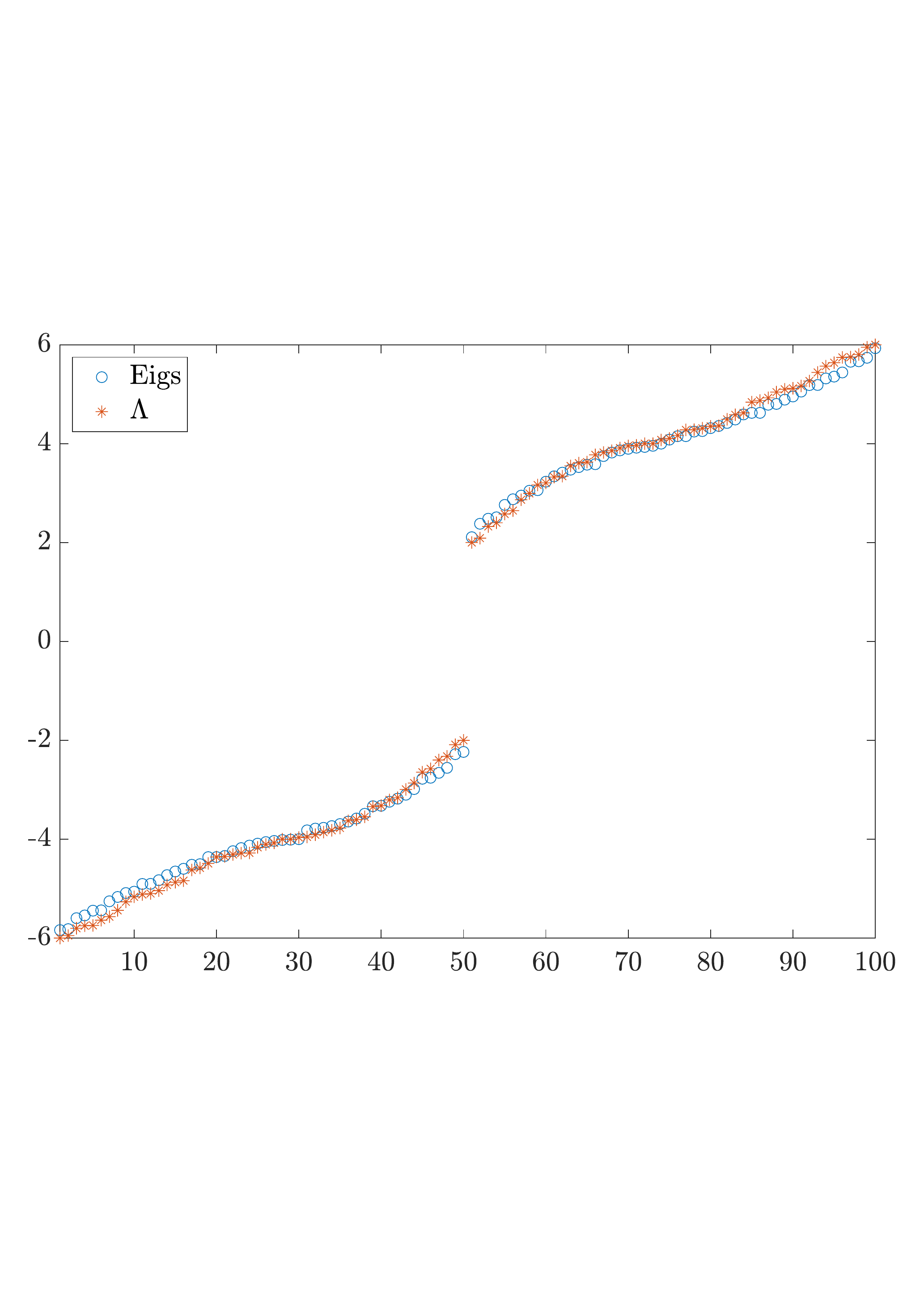}
	\end{subfigure}
	\begin{subfigure}[b]{0.48\textwidth}
		\includegraphics[width = \textwidth,clip=true,trim = 0cm 7cm 0cm 7cm]{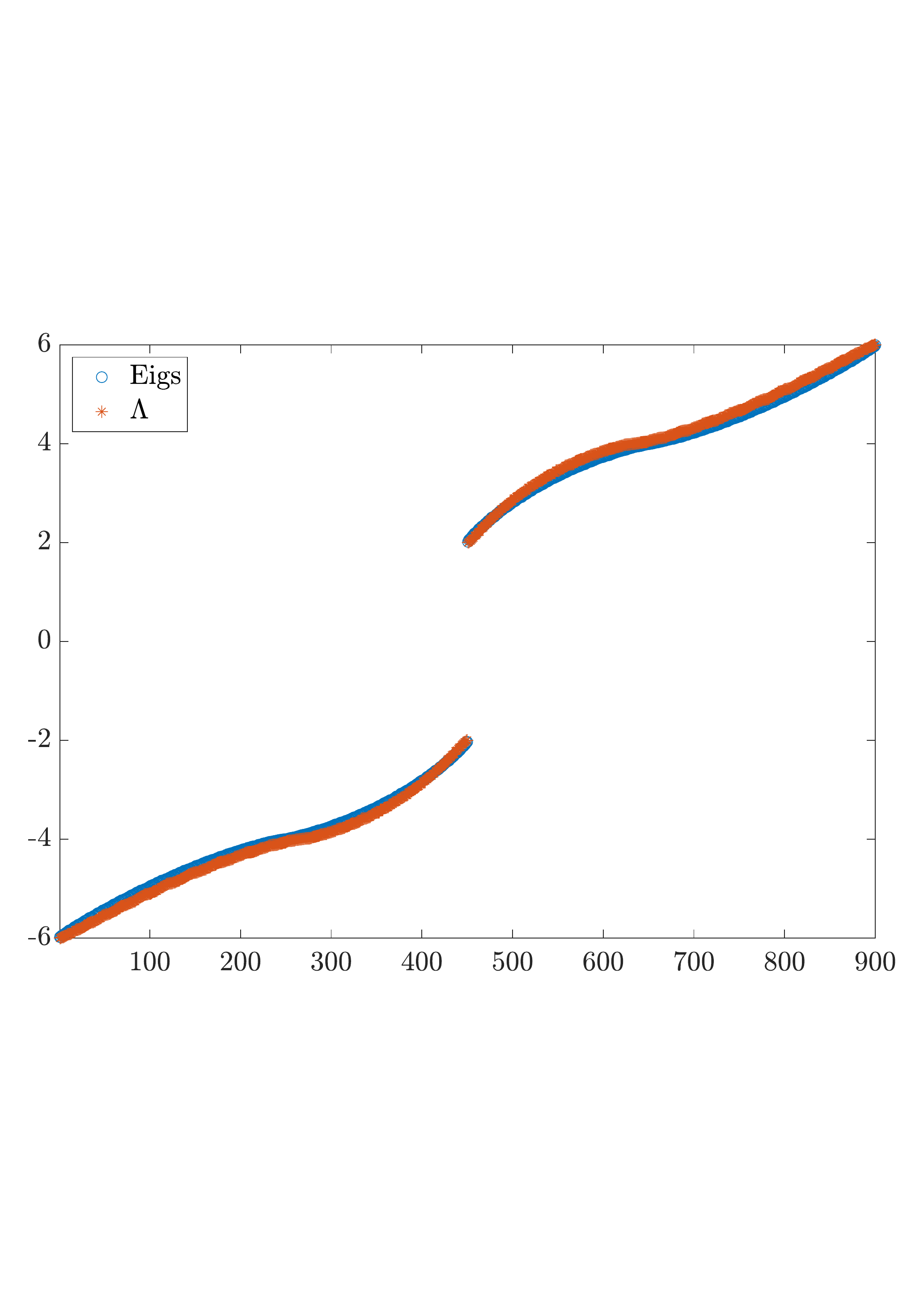}
	\end{subfigure}
	\caption{Comparison of the eigenvalues of $Y_{\ven}T_{\ven}(f)$ ({\color{blue} $\circ$}) with $\Lambda$ collecting the uniform samples of the eigenvalue functions of $g$ for \cref{ex:1} ({\color{red}$\ast$}) when $n_1 = n_2 = 10$ (left) and $n_1 = n_2 = 30$ (right).} 
	\label{fig:ex1}
\end{figure}

\begin{figure}[htbp]
	\centering
	\begin{subfigure}[b]{0.48\textwidth}
		\includegraphics[width = \textwidth,clip=true,trim = 0cm 7cm 0cm 7cm]{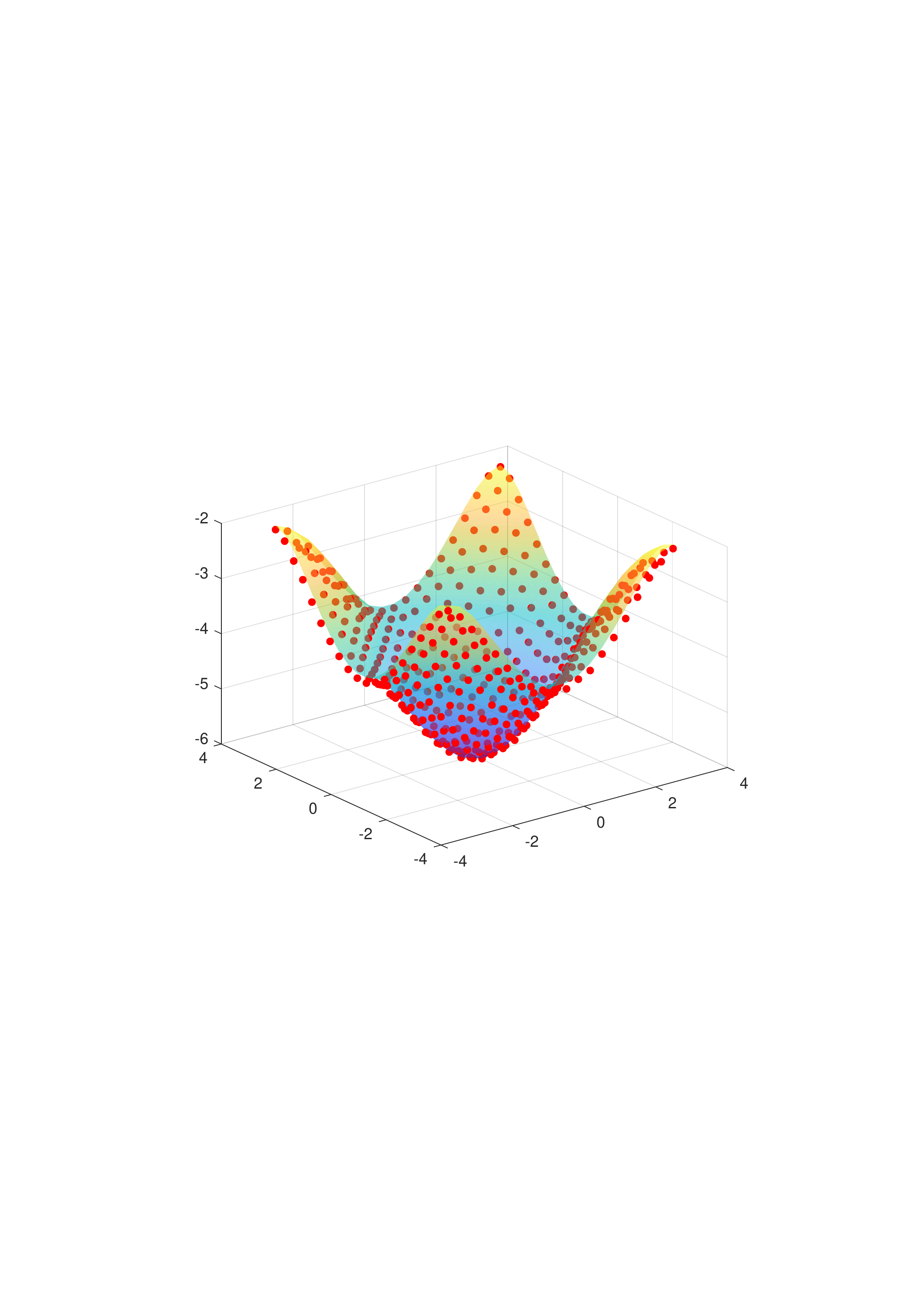}
						\subcaption{$\lambda_1(g)=-|f|$}
	\end{subfigure}
	\begin{subfigure}[b]{0.48\textwidth}
		\includegraphics[width = \textwidth,clip=true,trim = 0cm 7cm 0cm 7cm]{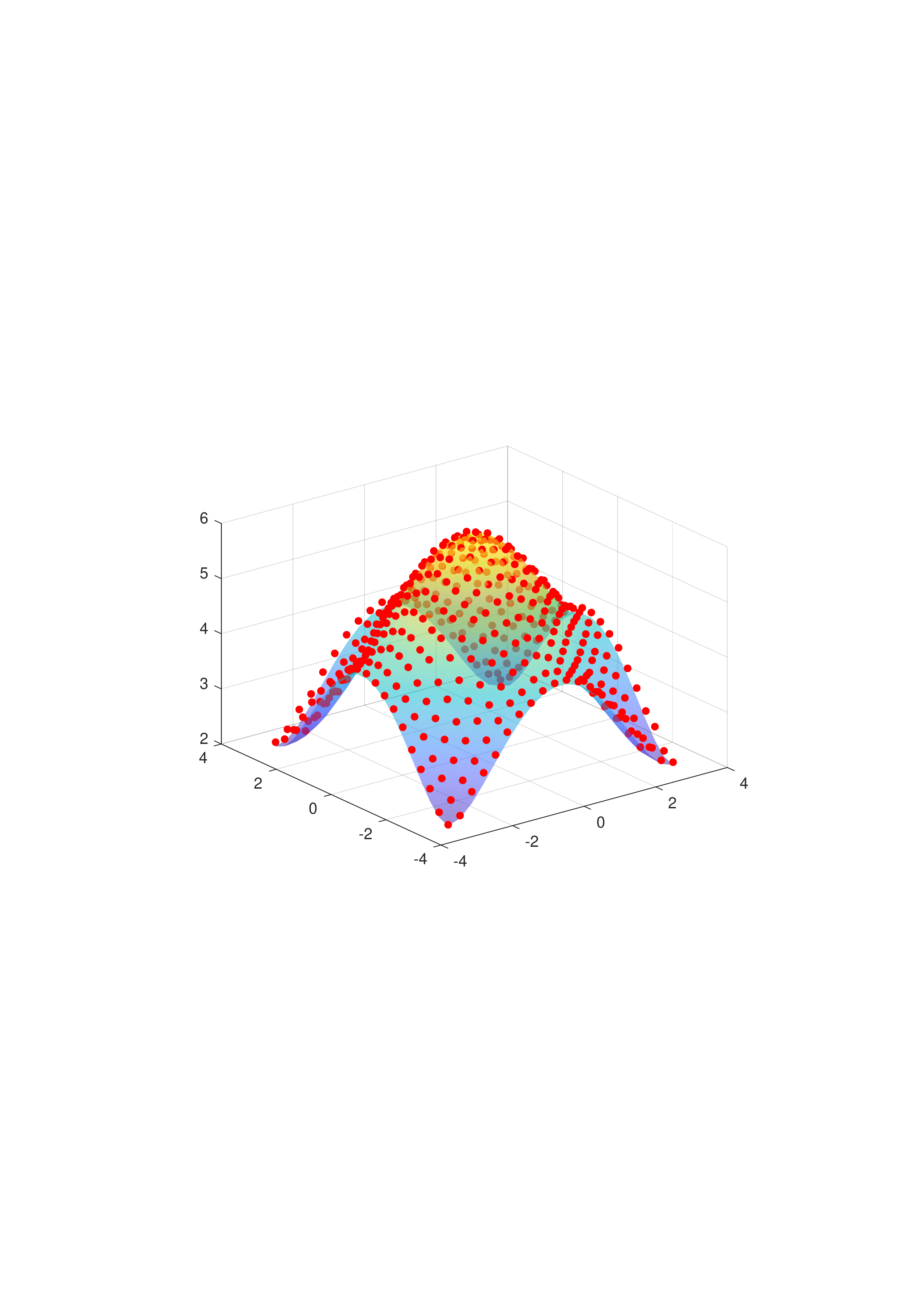}
						\subcaption{$\lambda_2(g)=|f|$}
	\end{subfigure}
	\caption{Eigenvalues of $Y_{\ven}T_{\ven}(f)$ (red dots) and the spectrum of $g$ (colored surfaces) for \cref{ex:1} when $n_1 = 20$ and $n_2 = 40$.} 
	\label{fig:ex1_surface}
\end{figure}

\end{example}

\begin{example} \label{ex:2}
In this example we consider the dense $2$-level Toeplitz matrix obtained by discretizing a certain time-dependent initial-boundary fractional diffusion problem by means of a second-order finite difference approximation that combines the Crank-Nicolson scheme and the so-called weighted and shifted Gr\"unwald formula (see \cite{Tian2015}). Precisely, we start from
\begin{align*}
\begin{cases}
\frac{\partial u(x,y,t)}{\partial
	t}=\frac{\partial^{\alpha}
	u(x,y,t)}{\partial_{+}x^{\alpha}}+\frac{\partial^{\beta}
	u(x,y,t)}{\partial_{+}y^{\beta}}+v(x,y,t), \qquad \qquad &(x,y,t)\in\Omega\times(0,1],\\
u(x,y,t)=0,  &(x,y,t)\in{\mathbb{R}^2\backslash\Omega}\times[0,1],\\
u(x,y,0)=u_0(x,y), &(x,y)\in\bar{\Omega},
\end{cases}
\end{align*}
where $\Omega=(0,1)\times(0,1)$, $\alpha,\beta\in(1,2)$, and $\frac{\partial^{\alpha}
	u(x,y,t)}{\partial_{+}x^{\alpha}}$, $\frac{\partial^{\alpha}
	u(x,y,t)}{\partial_{+}y^{\beta}}$ are
fractional derivatives defined in Riemann-Liouville form (see again \cite{DKMT20}). Then, for fixed $n_1,n_2,M\in\mathbb{N}$, we take the following equispaced partition of $\Omega\times [0,1]$
\begin{eqnarray*}
	x_i=ih_x,&\quad h_x=\frac{1}{n_1+1},&\quad i=0,1,\dots,n_1,\\
	y_j=jh_y,&\quad h_y=\frac{1}{n_2+1},&\quad j=0,1,\dots,n_2,\\
	t^{(m)}=m\Delta t,& \; \Delta t=\frac{1}{M},\quad &\quad m=0,1,\dots,M,
\end{eqnarray*}
and we arrive at a linear system whose coefficient matrix is the $2$-level Toeplitz matrix

\begin{align*}
\mathbf{\mathcal{M}}^{(\alpha,\beta)}_{\ven} &=\frac{2h_x^\alpha}{\Delta t}I_{n_1n_2}+I_{n_2}\otimes T_{n_1}(f_\alpha)+\frac{h_x^\alpha}{h_x^\beta}T_{n_2}(f_\beta)\otimes I_{n_1},
\end{align*}
with $\ven=(n_1,n_2)$,
\begin{equation*}\label{eq:fgamma}
f_{\gamma}(\theta)=-\sum_{k=-1}^\infty w_{k+1}^{(\beta)}{\rm e}^{\mathbf{i} k\theta}=-\bigg[\frac{2-\gamma(1-{\rm
		e}^{-\mathbf{i}\theta})}{2}\bigg]\left(1+{\rm
	e}^{\mathbf{i}(\theta+\pi)}\right)^{\gamma}, 
\end{equation*}
$\gamma\in\{\alpha,\beta\}$, $\theta\in\{\theta_1,\theta_2\}$, and $w_k^{(\beta)}$ defined as in \cite{Tian2015}. Both $T_{n_1}(f_\alpha)$ and $T_{n_2}(f_\beta)$ are lower Hessenberg, and so $\mathbf{\mathcal{M}}^{(\alpha,\beta)}_{\ven}$ is non-symmetric.

It has been shown in \cite{Moghaderi2017} that, whenever $\frac{h_x^\alpha}{h_x^\beta}=O(1)$ and $\frac{2h_x^\alpha}{\Delta t}=o(1)$, it holds that $$\{\mathbf{\mathcal{M}}^{(\alpha,\beta)}_{\ven}\}_{\ven}\sim_{\lambda} f_{\alpha,\beta}:=f_\alpha(\theta_1)+\frac{h_x^\alpha}{h_x^\beta}f_\beta(\theta_2),$$
i.e., $\mathbf{\mathcal{M}}^{(\alpha,\beta)}_{\ven}=T_{\ven}(f_{\alpha,\beta})$.

In the following tests we fix $\alpha=1.8$ and $\beta=1.6$. \cref{fig:ex2}(a) shows that when $M=n_1=30$ and $n_2=35$ the eigenvalues of the flipped Toeplitz matrix $\Yn\Tn(f)$ are well described by the sampling of the eigenvalue functions of $g$ given in $\Lambda$. Similar results can be inferred from \cref{fig:ex2_surface} when comparing the eigenvalues of $\Yn\Tn(f)$ directly when the spectrum of $g$ with $n_1=20$, $n_2=40$.

For this example we also show how the results in \cref{sec:main} can be used to describe the convergence rate of preconditioned MINRES, which depends heavily on the spectral properties of the coefficient matrix (see, e.g., \cite[Chapters 2 \& 4]{ESW14}). With this aim we focus on the solution of the following linear system
\begin{equation*}\label{eq:linsys}
T_{\ven}(f_{\alpha,\beta})u_{\ven}=b_{\ven}, 
\end{equation*}
with $b_{\ven}=2h_x^\alpha\ve{1}$, and we define the following preconditioners for $\Y_{\ven}T_{\ven}(f_{\alpha,\beta})$:
\begin{itemize}
	\item $\Prec\sn=T_{\ven}(f_R)$, with $f_R=\frac{f_{\alpha,\beta}+f^\ast_{\alpha,\beta}}{2}$. Of course, in this case the symbol of the preconditioning matrix sequence is $h=f_R$;
	\item $\Prec\sn=P^{2,2}_{\ven}$, obtained from $\mathbf{\mathcal{M}}^{(\alpha,\beta)}_{\ven}$ replacing $T_{n_1}(f_\alpha)$, $T_{n_2}(f_\beta)$, with $T_{n_1}(2-2\cos\theta_1)$, $T_{n_2}(2-2\cos\theta_2)$, respectively (see \cite{Moghaderi2017} for more details). In this case, the symbol of the preconditioning matrix sequence is $h=2-2\cos\theta_1+\frac{h_x^\alpha}{h_x^\beta}(2-2\cos\theta_2)$;
	\item 
	\begin{sloppypar}
	$\Prec\sn=P^{2,\beta}_{\ven}$, obtained from $\mathbf{\mathcal{M}}^{(\alpha,\beta)}_{\ven}$ replacing $T_{n_1}(f_\alpha)$ with $T_{n_1}(2-2\cos(\theta_1))$, and $T_{n_2}(f_\beta)$ with the real part of its tetra-diagonal band truncation $T_{n_2}(p_\beta(\theta_2))$, where
\begin{equation*}
p_\beta(\theta_2)=-\sum_{k=-1}^2w_{k+1}^{(\beta)}{\rm e}^{\i k\theta_2}.
\end{equation*}
\end{sloppypar}
In this case, the symbol of the preconditioning matrix sequence is $h=2-2\cos\theta_1+\frac{h_x^\alpha}{h_x^\beta}\frac{p_{\beta}(\theta_2)+p^\ast_{\beta}(\theta_2)}{2}$.
\end{itemize}
All of the aforementioned preconditioners are symmetric positive definite matrices that satisfy the conditions of \cref{thm:precflip} when $\ven=(n_1,n_2)$ has even components. \cref{fig:ex2}(b)--(d) show that the eigenvalues of $\Prec\sn^{-1}\Yn\Tn(f)$ are well described by the sampling of the eigenvalue functions of $h^{-1}g$ contained in $\Lambda$ even though not all components of $\ven$ are even, as required by \cref{thm:precflip} (here $n_1=30$ and $n_2=35$). Moreover, in all the given cases the eigenvalues of the preconditioned matrices lie close to $1$ and $-1$. This is particularly evident for $\Prec\sn=\Tn(f_R)$. Note that, when $\Prec\sn=P_{\ven}^{2,\beta}$, the eigenvalue functions of $h^{-1}g$ assume values around zero (while the eigenvalues of $\Prec\sn^{-1}\Yn\Tn(f)$ do not); this is because $p_\beta(\theta_2)$ does not have a zero at $\theta_2=0$ but $f_\beta(\theta_2)$ does.

From \cref{fig:ex2}(b)--(d) and since $\lambda_1(h^{-1}g),\lambda_2(h^{-1}g)$ are clustered at $\pm1$, we expect that preconditioned MINRES applied to the flipped version of \cref{eq:linsys} with preconditioners $\Tn(f_R)$, $P_{\ven}^{2,2}$ or $P_{\ven}^{2,\beta}$  will converge at a fast rate. In \cref{tab:ex2} the iterations of preconditioned MINRES are stopped when the residual norm is reduced by eight orders of magnitude, i.e, when $\|r_k\|_2/\|r_0\|_2 < 10^{-8}$. We see from these results that for all three preconditioners convergence is rapid, with $\Tn(f_R)$ resulting in the lowest iteration counts. Neither $P_{\ven}^{2,2}$ nor $P_{\ven}^{2,\beta}$  is optimal and this is in line with the spectral analysis performed in \cite{Moghaderi2017,DKMT20}. On the other hand, both are block banded with banded block matrices, and so are computationally affordable unlike the dense preconditioner $\Tn(f_R)$.

\begin{figure}[htbp]
	\centering
	\begin{subfigure}[b]{0.48\textwidth}
		\includegraphics[width = \textwidth,clip=true,trim = 0cm 7cm 0cm 7cm]{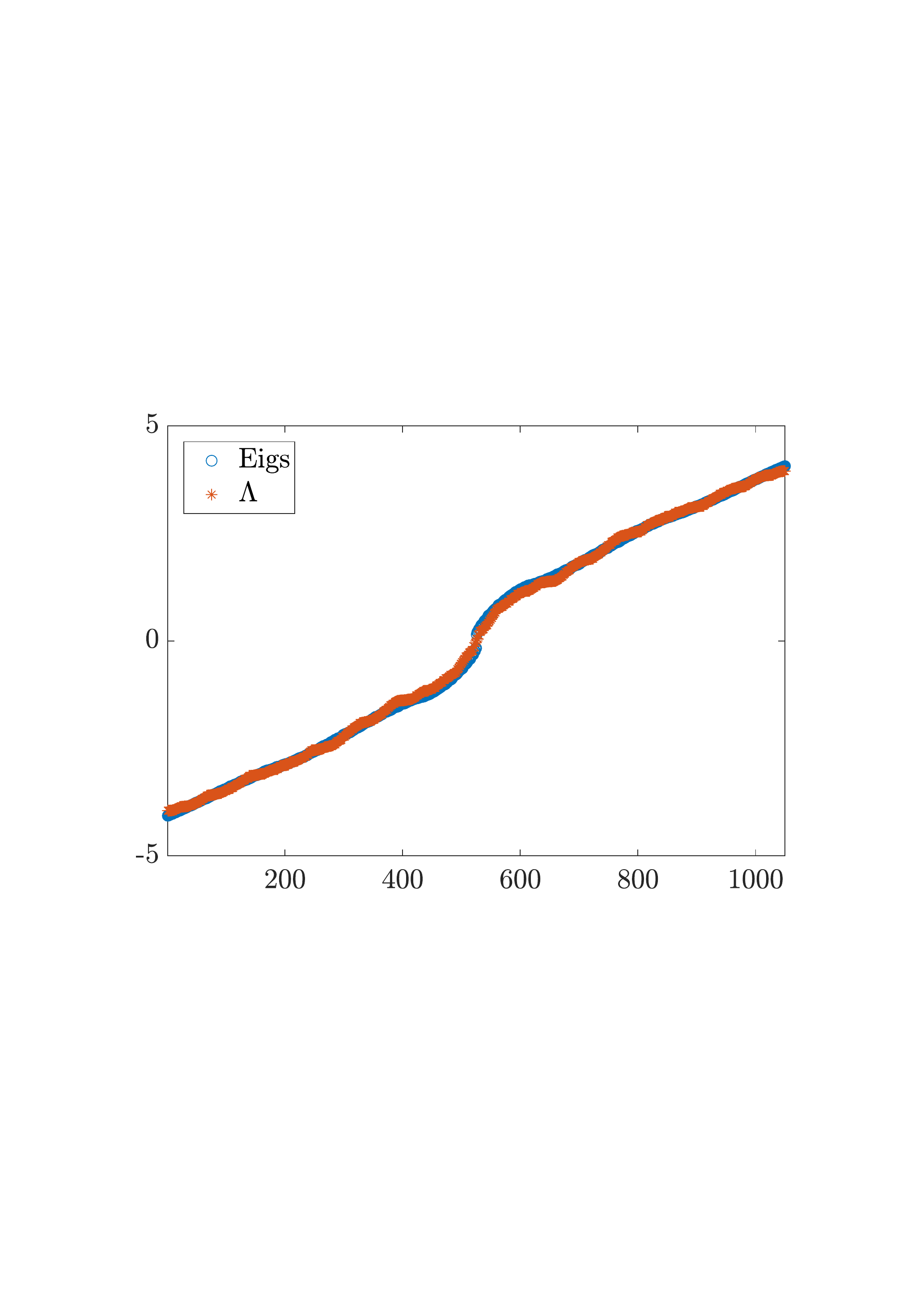}
		\subcaption{Unpreconditioned}
	\end{subfigure}
	\begin{subfigure}[b]{0.48\textwidth}
		\includegraphics[width = \textwidth,clip=true,trim = 0cm 7cm 0cm 7cm]{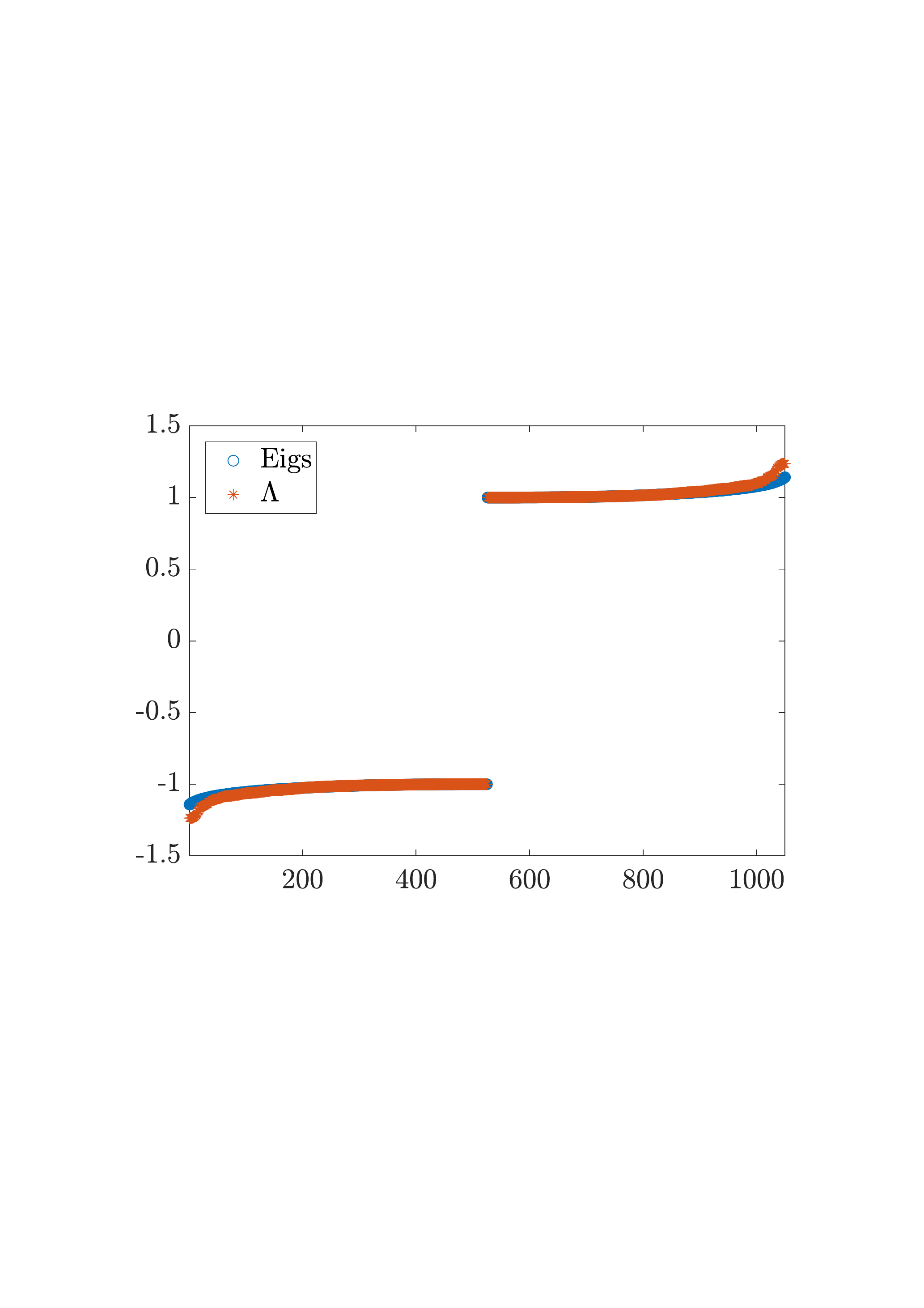}
		\subcaption{$\Prec\sn = T_{\ven}(f_R)$}
	\end{subfigure}
	
	\begin{subfigure}[b]{0.48\textwidth}
		\includegraphics[width = \textwidth,clip=true,trim = 0cm 7cm 0cm 7cm]{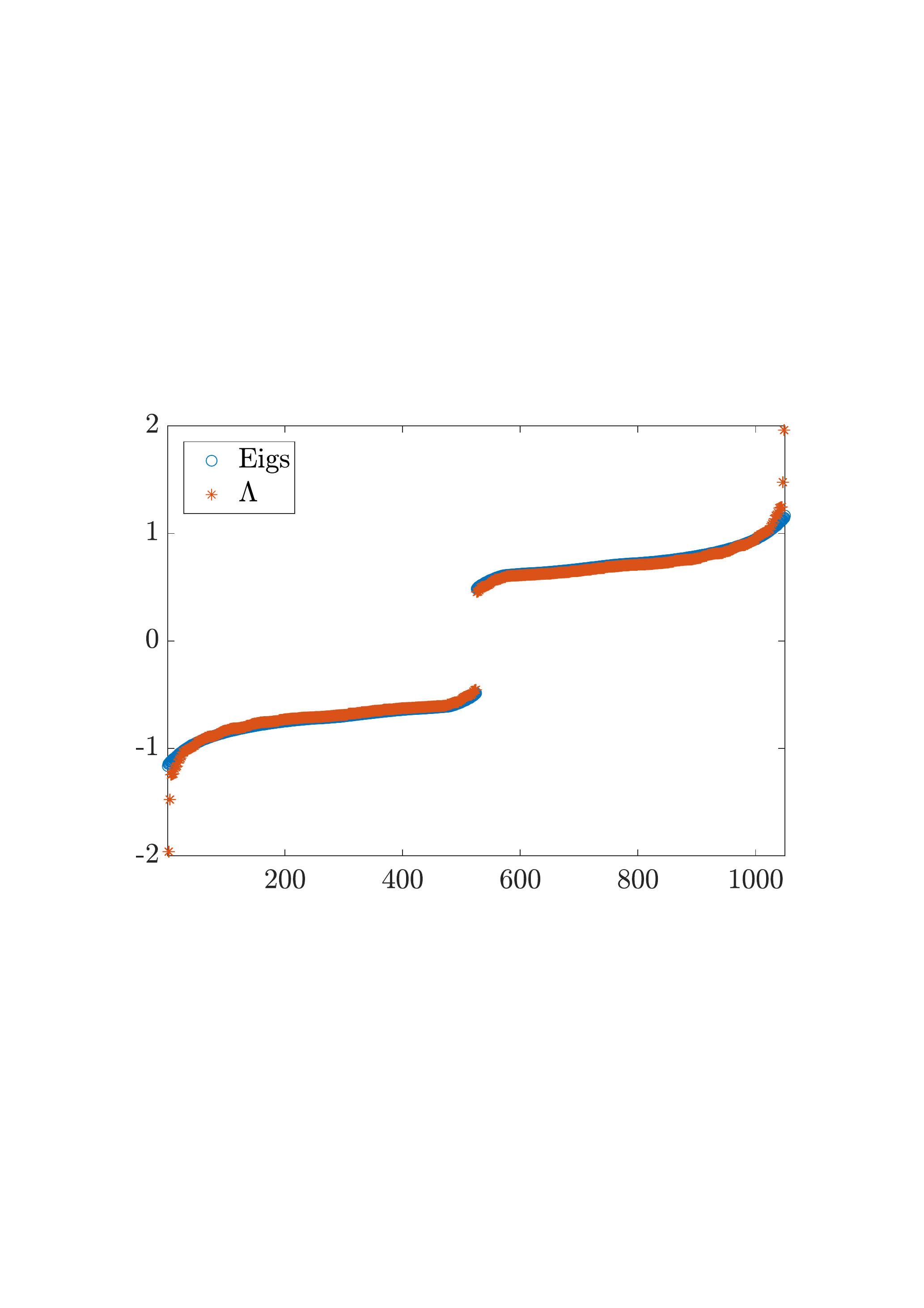}
		\subcaption{$\Prec\sn= P^{2,2}_{\ven}$}
	\end{subfigure}
	\begin{subfigure}[b]{0.48\textwidth}
		\includegraphics[width = \textwidth,clip=true,trim = 0cm 7cm 0cm 7cm]{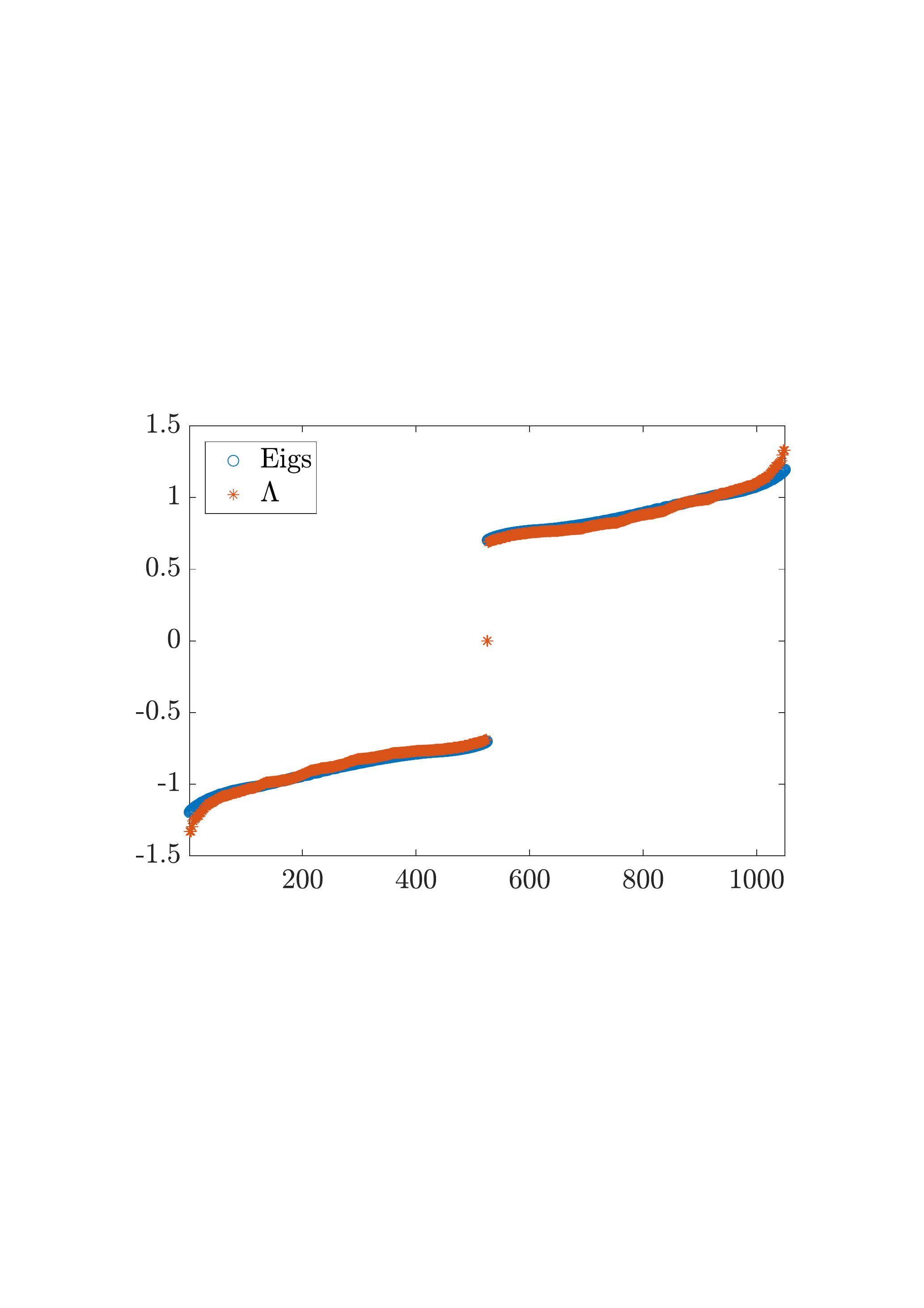}
		\subcaption{$\Prec\sn = P^{2,\beta}_{\ven}$}
	\end{subfigure}
	\caption{Comparison of the eigenvalues of $Y_{\ven}T_{\ven}(f_{\alpha,\beta})$ or $\Prec\sn^{-1}Y_{\ven}T_{\ven}(f_{\alpha,\beta})$ ({\color{blue} $\circ$}) with $\Lambda$ collecting the uniform samples of the eigenvalue functions of $g$ or $h^{-1}g$ for \cref{ex:2} ({\color{red}$\ast$}) when $M=n_1=30$, $n_2 = 35$.} 
	\label{fig:ex2}
\end{figure}

\begin{figure}[htbp]
	\centering
	\begin{subfigure}[b]{0.48\textwidth}
		\includegraphics[width = \textwidth,clip=true,trim = 0cm 7cm 0cm 7cm]{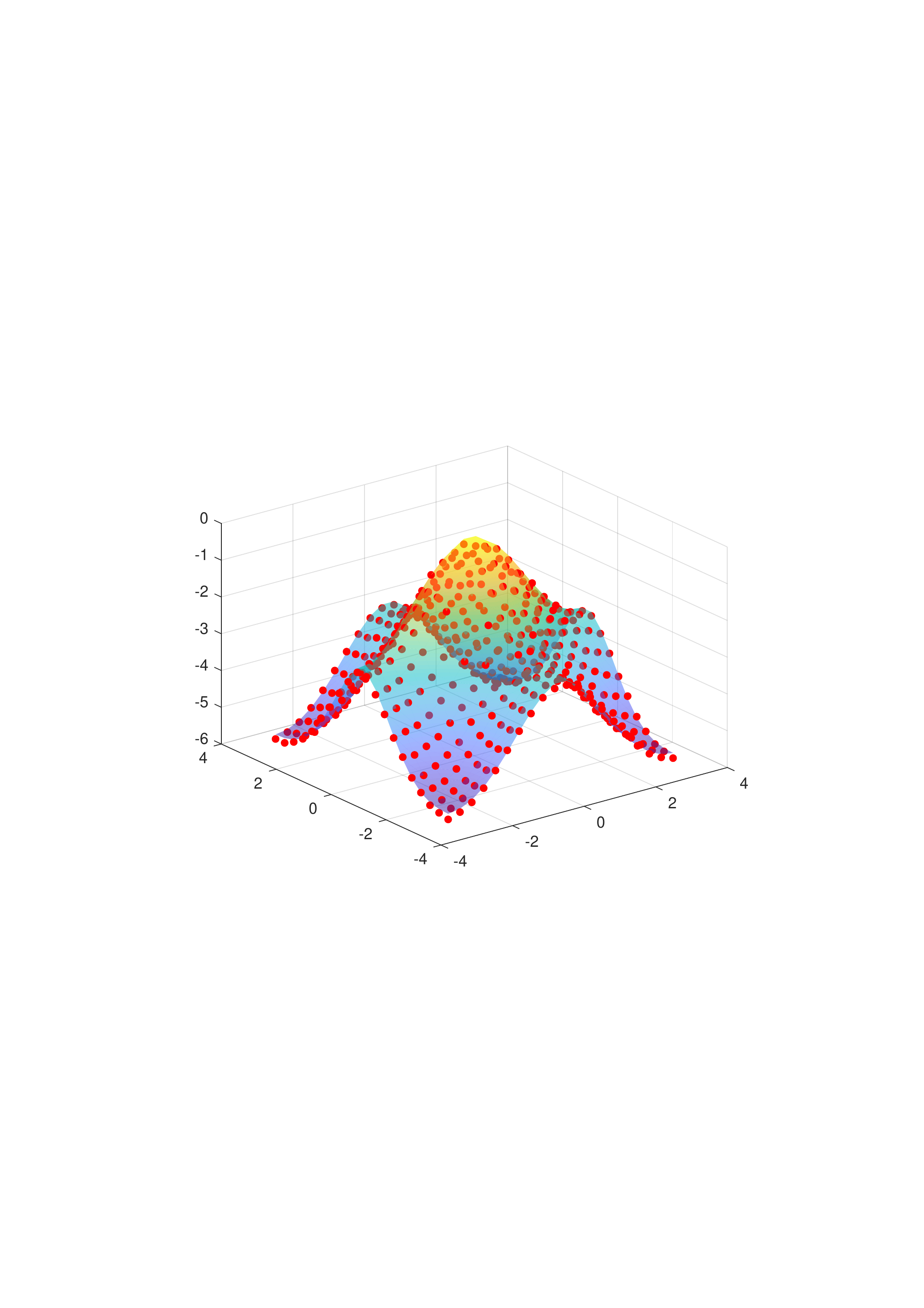}
				\subcaption{$\lambda_1(g)=-|f|$}
	\end{subfigure}
	\begin{subfigure}[b]{0.48\textwidth}
		\includegraphics[width = \textwidth,clip=true,trim = 0cm 7cm 0cm 7cm]{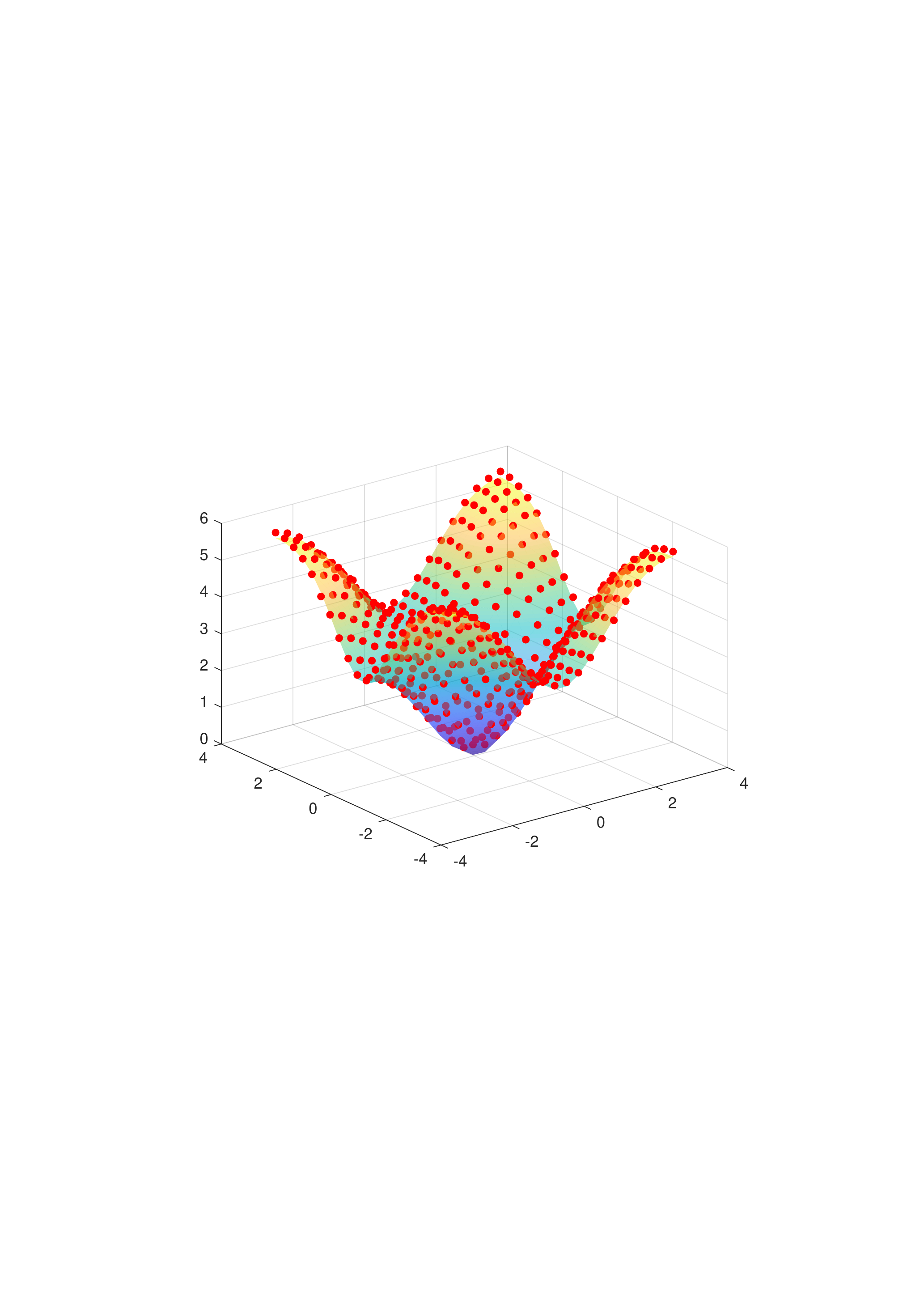}
						\subcaption{$\lambda_2(g)=|f|$}
	\end{subfigure}
	\caption{Eigenvalues of $Y_{\ven}T_{\ven}(f)$ (red dots) and the spectrum of $g$ (colored surfaces) for \cref{ex:2} when $n_1 = 20$ and $n_2 = 40$.} 
	\label{fig:ex2_surface}
\end{figure}

\begin{table}
\centering
\caption{Preconditioned MINRES iteration counts for \cref{ex:2}.}
\begin{tabular}{c c c c}
\hline
$\npr$ & $T_{\ven}(f_R)$ & $P_{\ven}^{2,2}$ & $P_{\ven}^{2,\beta}$\\
\hline
\hline
$10^2$ & 12 & 29 & 22\\
$20^2$ & 13 & 35 & 26\\
$40^2$ & 14 & 41 & 27\\
$80^2$ & 14 & 43 & 29\\
\hline
\end{tabular}\label{tab:ex2}
\end{table}

\end{example}

\begin{example} \label{ex:3}
In our final example we consider the $3$-level Toeplitz matrix arising from an upwind finite difference discretization of the convection-diffusion equation 
\begin{align*}
\begin{cases}
-\triangle u(x,y,z) + \boldsymbol{w}\cdot\nabla u(x,y,z)= f(x,y,z), \qquad \qquad &(x,y,z)\in\Omega,\\
u(x,y,z)=0,  &(x,y,z)\in\partial\Omega,
\end{cases}
\end{align*}
where $\Omega=(0,1)^3$ and $\boldsymbol{w} = [2, \ 1, \ 1.5]^T$.

For fixed $n_1,n_2,n_3\in\mathbb{N}$, we take the following equispaced partition of $\Omega$
\begin{eqnarray*}
	x_i=ih_x,&\quad h_x=\frac{1}{n_1+1},&\quad i=0,1,\dots,n_1,\\
	y_j=jh_y,&\quad h_y=\frac{1}{n_2+1},&\quad j=0,1,\dots,n_2, \\
	z_k=kh_z,&\quad h_z=\frac{1}{n_3+1},&\quad k=0,1,\dots,n_3, \\
\end{eqnarray*}
and apply the discretization in \cite{ChNg02}. The resulting coefficient matrix is 
$T_{\ven} = T_{n_3} \otimes I_{n_2} \otimes I_{n_1} + I_{n_3} \otimes T_{n_2}\otimes I_{n_1} + I_{n_3} \otimes I_{n_2} \otimes T_{n_1}$,  where $\ven = (n_1,n_2,n_3)$,  
\[
T_{n_1} = \begin{bmatrix} \mathfrak{a} & \mathfrak{c} & & & &  \\ \mathfrak{b} & \mathfrak{a} & \mathfrak{c} & & \\ & \ddots & \ddots & \ddots & \\ & & \mathfrak{b} & \mathfrak{a} & \mathfrak{c}\\ & & & \mathfrak{b} & \mathfrak{a}\end{bmatrix}, \quad 
T_{n_2} = \begin{bmatrix} 0 & \mathfrak{e} & & & &  \\ \mathfrak{d} & 0 & \mathfrak{e} & & \\ & \ddots & \ddots & \ddots & \\ & & \mathfrak{d} & 0 & \mathfrak{e}\\ & & & \mathfrak{d} & 0\end{bmatrix}
\]
\[T_{n_3} = \begin{bmatrix} 0 & \mathfrak{g} & & & &  \\ \mathfrak{f} & 0 & \mathfrak{g} & & \\ & \ddots & \ddots & \ddots & \\ & & \mathfrak{f} & 0 & \mathfrak{g}\\ & & & \mathfrak{f} & 0\end{bmatrix}
\]
with 
$\mathfrak{a} = 6 + 2 h_x + h_y + 1.5 h_z$, $\mathfrak{b} = -1 -2 h_x$, $\mathfrak{c} = -1$, $\mathfrak{d} = -1 - h_y$, $\mathfrak{e} = -1$, $\mathfrak{f} = -1 -1.5 h_z$, and $\mathfrak{g} = -1$. 
The associated symbol is $f(\theta_1,\theta_2,\theta_3) = f_1(\theta_1) + f_2(\theta_2) + f_3(\theta_3)$, where 
$f_1(\theta) = \mathfrak{a} + \mathfrak{b}{\rm e}^{\i \theta} + \mathfrak{c}{\rm e}^{-\i \theta}$, $f_2(\theta) = \mathfrak{d}{\rm e}^{\i \theta} + \mathfrak{e}{\rm e}^{-\i \theta}$ and $f_3(\theta) = \mathfrak{f}{\rm e}^{\i \theta} + \mathfrak{g}{\rm e}^{-\i \theta}$.

Also for this example we check the performance of the preconditioned MINRES method for solving the linear system $T_{\ven}u_{\ven}=b_{\ven}$ with $b_{\ven}=\ve{1}$. As preconditioners we choose $\mathcal{P}\sn=T_{\ven}(f_{R})$ and the positive definite 3-level circulant preconditioner defined as $$\mathcal{P}\sn=C_{\ven}=|C_{n_3}| \otimes I_{n_2} \otimes I_{n_1} + I_{n_3} \otimes |C_{n_2}| \otimes I_{n_1} + I_{n_3} \otimes I_{n_2} \otimes |C_{n_1}|$$ with $|C_{n_\ell}|=(C_{n_\ell}^T C_{n_\ell})^{\frac12}$, where $C_{n_\ell}$ is the optimal circulant preconditioner for $T_{n_\ell}$, with $\ell=1,2,3$. In the latter case, $h=|f_1|+|f_2|+|f_3|$. These symmetric positive definite preconditioners satisfy the conditions of \cref{thm:precflip}.

\cref{fig:ex3}(a)--(c) shows the matching between the eigenvalues of $Y_{\ven}T_{\ven}$ or $\mathcal{P}\sn^{-1}Y_{\ven}T_{\ven}$ and the sampling of the eigenvalue functions of $g$ or $h^{-1}g$ contained in $\Lambda$. From these pictures we infer that, as in previous example, $T_{\ven}(f_{R})$ is a good preconditioner. On the contrary, we expect that since $\lambda_1(h^{-1}g),\lambda_2(h^{-1}g)$ are not clustered away from 0, $C_{\ven}$ is not able to ensure fast convergence. This is confirmed by the iteration counts in \cref{tab:ex3}.

\begin{figure}[htbp]
	\centering
	\begin{subfigure}[b]{0.30\textwidth}
		\includegraphics[width = \textwidth,clip=true,trim = 0cm 7cm 0cm 7cm]{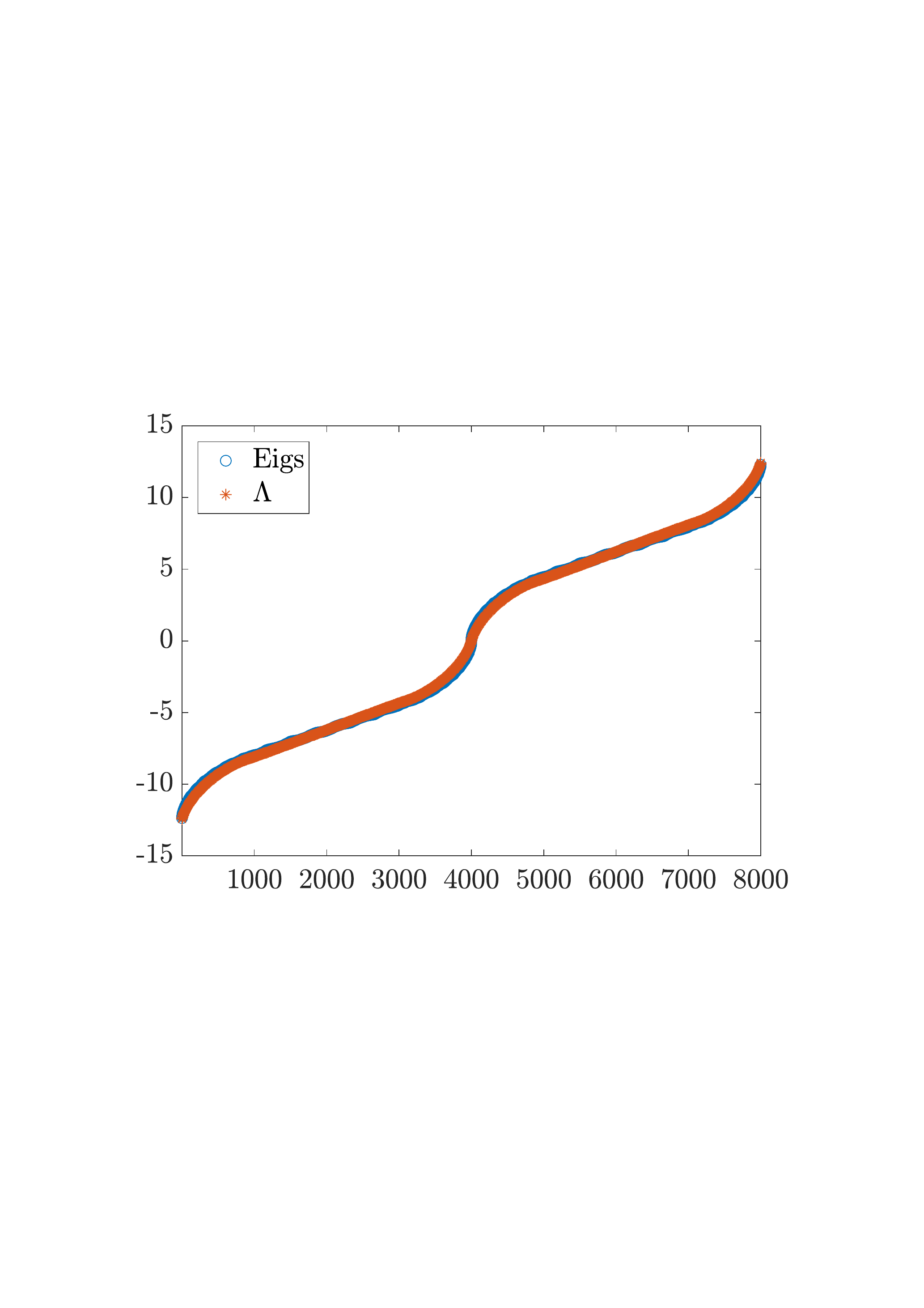}
		\subcaption{Unpreconditioned}
	\end{subfigure}
	\begin{subfigure}[b]{0.30\textwidth}
		\includegraphics[width = \textwidth,clip=true,trim = 0cm 7cm 0cm 7cm]{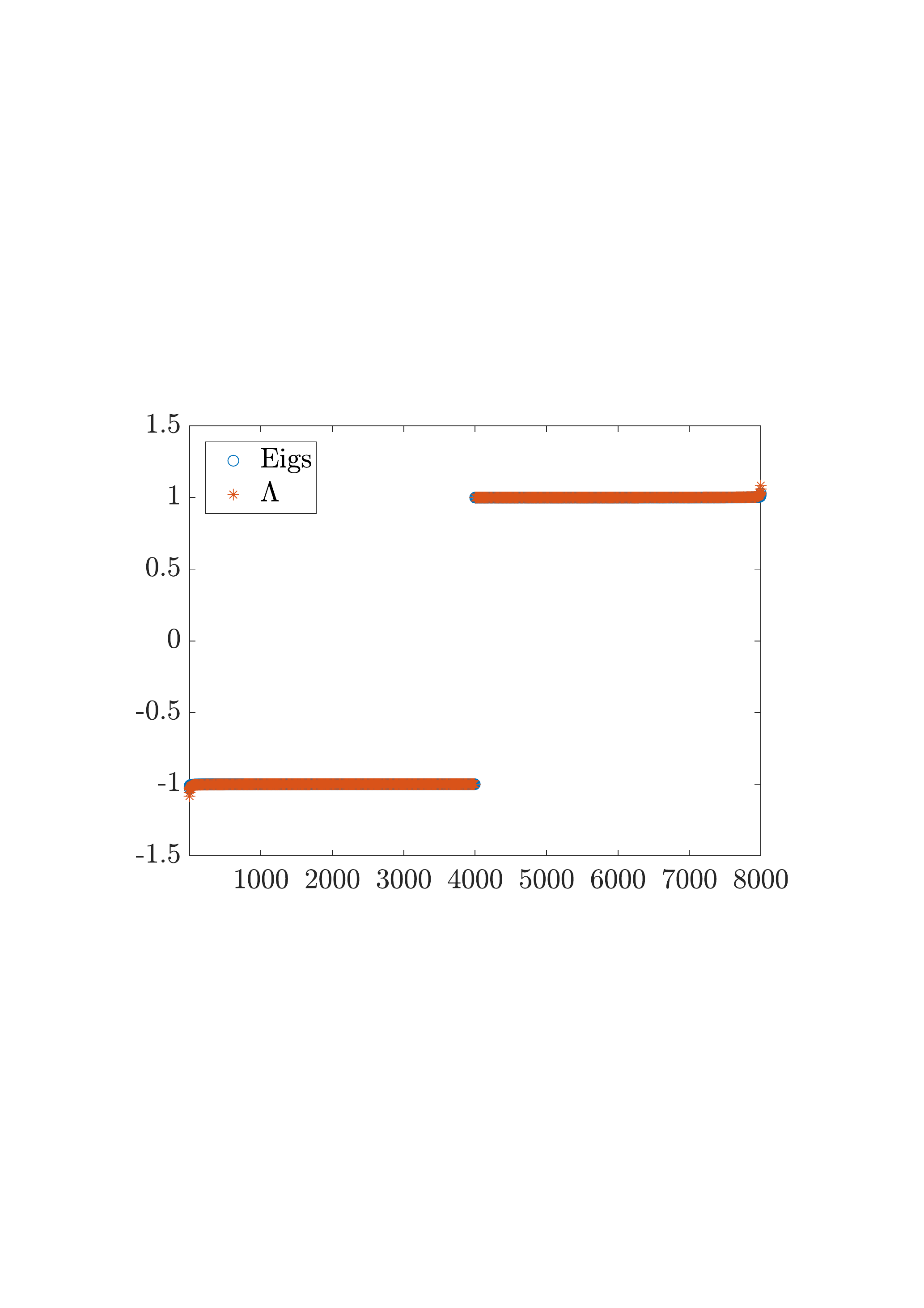}
		\subcaption{$\Prec\sn = T_{\ven}(f_R)$}
	\end{subfigure}
	\begin{subfigure}[b]{0.30\textwidth}
		\includegraphics[width = \textwidth,clip=true,trim = 0cm 7cm 0cm 7cm]{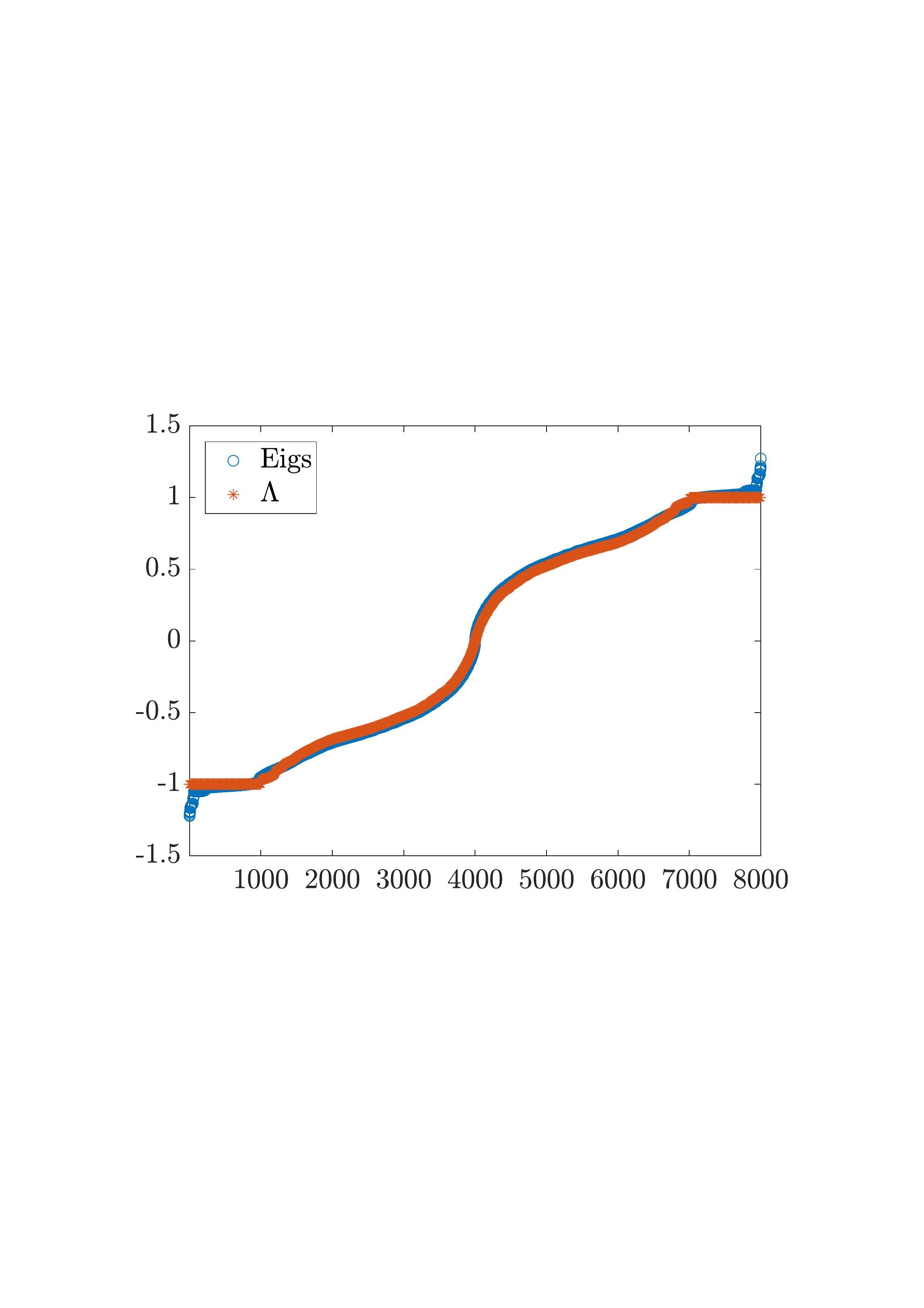}
		\subcaption{$\Prec\sn= C_{\ven}$}
	\end{subfigure}
	\caption{Comparison of the eigenvalues of $Y_{\ven}T_{\ven}$ or $\Prec\sn^{-1}Y_{\ven}T_{\ven}$ ({\color{blue} $\circ$}) with $\Lambda$ collecting the uniform samples of the eigenvalue functions of $g$ or $h^{-1}g$ for \cref{ex:3} ({\color{red}$\ast$}) $n_1=n_2=n_3= 20$.} 
	\label{fig:ex3}
\end{figure}

\begin{table}
	\centering
	\caption{Preconditioned MINRES iteration counts for \cref{ex:3}.}
	\begin{tabular}{c c c}
\hline
$\npr$ & $T_{\ven}(f_R)$ & $C_{\ven}$ \\
\hline
\hline
$5^3$ & 8 & 61\\
$10^3$ & 9 & 198\\
$20^3$ & 9 & 724\\
\hline
	\end{tabular}\label{tab:ex3}
\end{table}

\end{example}

\section{Conclusions} \label{sec:conc}
We have shown that the asymptotic eigenvalue distribution of $\{\Yn\Tn(f)\}\sn$, 
where $\Tn(f)$ is a square real multilevel Toeplitz matrix generated by $f\in L^1([-\pi,\pi]^d)$ and 
$\Yn$ is the exchange matrix, is governed by a $2\times 2 $ matrix-valued function 
whose eigenvalues are $\pm |f|$. We have also investigated the asymptotic eigenvalue distribution of 
preconditioned sequences  $\{\mathcal{P}^{-1}\sn \Yn\Tn(f)\}\sn$, where  
 $\mathcal{P}\sn$ is Hermitian positive definite,  
$\{\Prec\p\}\p\sim_{\rm GLT}h$, and $\{\Pin\Un\Prec\p\Un\Pin^T\}\p\sim_{\rm GLT}h\cdot I_2$ with $h:[-\pi,\pi]^d\rightarrow\mathbb{C}$ and $h\ne0$ a.e.
The latter result enables us to analyse the convergence of preconditioned MINRES for this problem at least in the two quite common cases where the preconditioners are multilevel circulant or multilevel Toeplitz matrices.

\section*{Acknowledgement}
The first author is member of the INdAM research group
GNCS and her work was partly supported by the GNCS-INDAM Young Researcher Project 2020 titled \lq\lq Numerical methods for image restoration and cultural heritage deterioration''. 
The second author gratefully acknowledges support from the EPSRC grant EP/R009821/1. No new data were created for this publication.

\bibliographystyle{siamplain}
\bibliography{Hankel_abbrv}

\begin{thebibliography}{10}

\bibitem{ChNg02}
{\sc W.~M. Cheung and M.~K. Ng}, {\em Block-circulant preconditioners for
  systems arising from discretization of the three-dimensional
  convection--diffusion equation}, J. Comput. Appl. Math., 140 (2002),
  pp.~143--158, \url{https://doi.org/10.1016/S0377-0427(01)00519-2},
  \url{http://www.sciencedirect.com/science/article/pii/S0377042701005192}.
\newblock Int. Congress on Computational and Applied Mathematics 2000.

\bibitem{DKMT20}
{\sc M.~Donatelli, R.~Krause, M.~Mazza, and K.~Trotti}, {\em Multigrid
  preconditioners for anisotropic space-fractional diffusion equations}, Adv.
  in Comput. Math., 46 (2020), p.~49,
  \url{https://doi.org/10.1007/s10444-020-09790-2}.

\bibitem{ESW14}
{\sc H.~Elman, D.~Silvester, and A.~Wathen}, {\em Finite Elements and Fast
  Iterative Solvers with Applications in Incompressible Fluid Dynamics}, Oxford
  University Press, 2nd~ed., 2014.

\bibitem{FaTi00}
{\sc D.~Fasino and P.~Tilli}, {\em Spectral clustering properties of block
  multilevel {Hankel} matrices}, Linear Algebra Appl., 306 (2000),
  pp.~155--163, \url{https://doi.org/10.1016/S0024-3795(99)00251-7}.

\bibitem{FFHM19}
{\sc P.~Ferrari, I.~Furci, S.~Hon, M.~A. Mursaleen, and S.~Serra-Capizzano},
  {\em The eigenvalue distribution of special 2-by-2 block matrix-sequences
  with applications to the case of symmetrized {T}oeplitz structures}, SIAM J.
  Matrix Anal. Appl., 40 (2019), pp.~1066--1086,
  \url{https://doi.org/10.1137/18M1207399}.

\bibitem{axioms}
{\sc C.~Garoni, M.~Mazza, and S.~Serra-Capizzano}, {\em Block generalized
  locally {Toeplitz} sequences: From the theory to the applications}, Axioms, 7
  (2018), p.~49, \url{https://doi.org/10.3390/axioms7030049}.

\bibitem{GaSe17}
{\sc C.~Garoni and S.~Serra-Capizzano}, {\em Generalized locally {T}oeplitz
  sequences: theory and applications. {V}ol. {I}}, Springer, Cham, 2017.

\bibitem{garoni2018}
{\sc C.~Garoni and S.~Serra-Capizzano}, {\em Generalized locally {T}oeplitz
  sequences: a spectral analysis tool for discretized differential equations},
  in Splines and PDEs: From Approximation Theory to Numerical Linear Algebra,
  Springer, 2018, pp.~161--236.

\bibitem{GaSe18}
{\sc C.~Garoni and S.~Serra-Capizzano}, {\em Generalized locally {T}oeplitz
  sequences: theory and applications. {V}ol. {II}}, Springer, Cham, 2018.

\bibitem{GSz}
{\sc U.~Grenander and G.~Szeg\H{o}}, {\em Toeplitz {F}orms and {T}heir
  {A}pplications}, vol.~321, Second Edition, Chelsea, New York, 1984.

\bibitem{MaPe19}
{\sc M.~Mazza and J.~Pestana}, {\em Spectral properties of flipped {T}oeplitz
  matrices and related preconditioning}, BIT, 59 (2019), pp.~463--482,
  \url{https://doi.org/10.1007/s10543-018-0740-y}.

\bibitem{mari}
{\sc M.~Mazza, A.~Ratnani, and S.~Serra-Capizzano}, {\em Spectral analysis and
  spectral symbol for the 2d curl-curl (stabilized) operator with applications
  to the related iterative solutions}, Math. Comput., 88 (2018),
  pp.~1155--1188, \url{https://doi.org/10.1090/mcom/3366}.

\bibitem{Moghaderi2017}
{\sc H.~Moghaderi, M.~Dehghan, M.~Donatelli, and M.~Mazza}, {\em Spectral
  analysis and multigrid preconditioners for two-dimensional space-fractional
  diffusion equations}, J. Comput. Phys., 350 (2017), pp.~992--1011,
  \url{https://doi.org/https://doi.org/10.1016/j.jcp.2017.08.064},
  \url{http://www.sciencedirect.com/science/article/pii/S0021999117306459}.

\bibitem{Pest19}
{\sc J.~Pestana}, {\em Preconditioners for symmetrized {Toeplitz} and
  multilevel {Toeplitz} matrices}, SIAM J. Matrix Anal. Appl., 40 (2019),
  pp.~870--887, \url{https://doi.org/10.1137/18M1205406}.

\bibitem{PeWa15}
{\sc J.~Pestana and A.~J. Wathen}, {\em A preconditioned {MINRES} method for
  nonsymmetric {T}oeplitz matrices}, SIAM J. Matrix Anal. Appl., 36 (2015),
  pp.~273--288, \url{https://doi.org/10.1137/140974213}.

\bibitem{Tian2015}
{\sc W.~Tian, H.~Zhou, and W.~Deng}, {\em A class of second order difference
  approximations for solving space fractional diffusion equations}, Math.
  Comp., 84 (2015), pp.~1703--1727,
  \url{https://doi.org/10.1090/S0025-5718-2015-02917-2}.

\bibitem{Tillinota}
{\sc P.~Tilli}, {\em A note on the spectral distribution of {T}oeplitz
  matrices}, Linear {M}ultilin. {A}lgebra, 45 (1998), pp.~147--159,
  \url{https://doi.org/10.1080/03081089808818584}.

\bibitem{TyL1}
{\sc N.~L. Zamarashkin and E.~E. Tyrtyshnikov}, {\em Distribution of
  eigenvalues and singular values of {Toeplitz} matrices under weakened
  conditions on the generating function}, Sbornik: Mathematics, 188 (1997),
  p.~1191, \url{https://doi.org/10.1070/SM1997v188n08ABEH000251}.

\end{thebibliography}
\end{document}